\newtheorem{thm}{Theorem}[section]
\newtheorem{lemma}[thm]{Lemma}
\newtheorem{cor}[thm]{Corollary}
\newtheorem{prop}[thm]{Proposition}
\newtheorem{question}[thm]{Question}
\theoremstyle{definition}  
\numberwithin{equation}{section}
\newtheorem{defn}[thm]{Definition}
\newtheorem{exs}[thm]{Examples}
\newtheorem{remark}[thm]{Remark}
\theoremstyle{definition}
\theoremstyle{remark}
\newcounter{enumitemp}
\DeclareMathOperator{\Bo}{O}
\DeclareMathOperator{\Homeo}{Homeo}
\DeclareMathOperator{\lo}{o}
\DeclareMathOperator{\rr}{range}
\DeclareMathOperator{\rank}{rank}
\DeclareMathOperator{\BS}{{\rm BS}}
\newcommand{\C}{{\mathcal C}}
\newcommand{\U}{{\mathcal U}}
\newcommand{\N}{{\mathbb N}}
\newcommand{\cR}{{\mathcal R}}
\newcommand{\Z}{\mathbb Z}
\newcommand{\shift}{subshift }
\newcommand{\shifts}{subshifts }
\newcommand{\shiftno}{subshift}
\def\CL{{\mathcal L}}
\def\CS{{\mathcal S}}
\def\Z{{\mathbb Z}}
\newcommand{\trycomment}[1]{}
\DeclareMathOperator{\Aut}{{\rm Aut}}
\DeclareMathOperator{\topo}{{\rm top}}
\title{Distortion and the automorphism group of a shift}
\author{Van Cyr}
\address{Bucknell University, Lewisburg, PA 17837 USA}
\email{van.cyr@bucknell.edu}
\author{John Franks}
\address{Northwestern University, Evanston, IL 60208 USA}
\email{j-franks@northwestern.edu}
\author{Bryna Kra}
\address{Northwestern University, Evanston, IL 60208 USA}
\email{kra@math.northwestern.edu}
\author{Samuel Petite}
\address{Laboratoire Ami\'enois
de Math\'ematiques Fondamentales et Appliqu\'ees, CNRS-UMR 7352, Universit\'{e} de Picardie Jules Verne, 33 rue Saint Leu, 80039   Amiens cedex 1,
France.} \email{samuel.petite@u-picardie.fr}
\subjclass[2010]{}
\keywords{subshift, automorphism, nonexpansive, distortion}
\thanks{The third author was partially supported by NSF grant 1500670.}
\dedicatory{Dedicated to the memory of Roy Adler.}
\begin{document}
\maketitle
 \begin{abstract} 
The set of automorphisms of a one-dimensional \shift $(X, \sigma)$ forms a countable, but often very complicated, group.  For zero entropy shifts, it has recently been shown that the automorphism group is more tame. We provide the first examples of countable groups that cannot embed into the automorphism group of any zero entropy \shiftno.  In particular, we show that the Baumslag-Solitar groups $\BS(1,n)$ and all other groups that contain exponentially distorted elements cannot embed into $\Aut(X)$ when $h_{\topo}(X)=0$.  We further show that distortion in nilpotent groups gives a nontrivial obstruction to embedding such a group in any low complexity shift. 
 \end{abstract}

\section{Introduction}
If $\Sigma$ is a finite alphabet and $X\subset\Sigma^{\Z}$ is a closed set that is invariant under the left shift $\sigma\colon\Sigma^{\Z}\to\Sigma^{\Z}$, then $(X, \sigma)$ is called a \shift.  
The collection of homeomorphisms $\phi\colon X\to X$ that commute with $\sigma$ 
forms a group (under composition) called the automorphism group $\Aut(X)$ of the shift $(X,\sigma)$.  
This group is always countable, but for many classical \shifts (including all mixing shifts of finite type) it has a  complicated subgroup structure, containing isomorphic copies of all locally finite, residually finite groups, the fundamental group of any $2$-manifold, the free group on two generators, and many other groups (see~\cite{H, BLR, KR1}). 
On the other hand, for shifts with low complexity (see Section~\ref{sec:background} for precise definitions), there are numerous restrictions that arise (see~\cite{CK1, CK2, DDMP}).  Theorems of this nature typically take the following form: suppose $(X,\sigma)$ is a \shift with some dynamical assumption (such as minimality or transitivity) and suppose that
the complexity function of $(X,\sigma)$ grows more slowly than some explicitly chosen subexponential rate, then $\Aut(X)$ has some particular algebraic property.  Without these growth-rate and dynamical assumptions, little is known about the algebraic structure of $\Aut(X)$ and it was asked in~\cite[Question 6.1]{DDMP} whether every countable group arises as the automorphism group of some minimal, zero entropy shift. 
We answer this question negatively, giving explicit countable groups that cannot embed.  Moreover, we give an algebraic constraint on $\Aut(X)$ that applies to any \shift with zero topological entropy (with no need for further assumptions on the dynamics). 

To show how these constraints arise, we study the types of distortion that can arise (or not) in $\Aut(X)$.  
For a finitely generated group $G$, an element $g\in G$ 
is distorted with respect to a symmetric generating set $\mathcal S$ if the distance of its iterates to the identity grows sublinearly (with 
respect to iteration) in the word-length metric.  A priori, this definition 
depends on the choice of a symmetric generating set $\mathcal S$, but it is well-known that $g$ is distorted with respect to one symmetric generating set if and only if it is with respect to every symmetric generating set. 
Thus we can refer to an element $g$ as distorted without making explicit reference to the set $\mathcal{S}$.  
Distortion can be quantified, depending on how slowly 
the distance of the iterates to the identity grows: we say that $g$ is {\em polynomially distorted} 
if $d_{\CS}(e,g^n)=O(n^{1/d})$ (for some $d\in\N$) and that $g$ is {\em exponentially distorted} if $d_{\CS}(e,g^n)=O(\log(n))$.  

A different notion of distortion is in terms of the range of 
the automorphism: we say that $\phi\in\Aut(X)$ is {\em range distorted} if the size of the shortest block-code defining 
$\phi^n$ grows sublinearly in $n$.  This idea is explored in~\cite{CFK}, where it is shown that if $\phi$ is of infinite order and range distorted then the topological entropy of $(X,\phi)$, rather than that of shift $(X,\sigma)$,  is zero.  
The two notions of distortion are related: if $G\subset\Aut(X)$ is a finitely generated subgroup and if $\phi\in G$ is distorted in $G$, then the automorphism $\phi$ is range distorted (see Proposition~\ref{prop: distortion}). 

One of our main tools is 
the interplay between the level of distortion in $\Aut(X)$ and the growth rate of the complexity function of $(X,\sigma)$.  We use this to study the algebraic structure of  the group $\Aut(X)$. 

In~\cite{CK2}, it is shown that for a minimal shift whose complexity grows 
at most polynomially, any finitely generated, torsion free subgroup of $\Aut(X,\sigma)$ is virtually 
nilpotent.   For very low complexity systems, we improve on this result, showing that for
any shift whose complexity function is $o(n^{((d+1)(d+2)/2)+2})$, any finitely generated, torsion 
free subgroup of the automorphism group is virtually $d$-step nilpotent (the precise statement is in Theorem~\ref{th:embeddingHeisenberg}).  
In particular, if the complexity is $o(n^5)$, 
any finitely generated, torsion free subgroup of $\Aut(X,\sigma)$ is virtually abelian.  
In Theorem~\ref{thm:nilpotent}, we show that the  growth-rate of the complexity of $(X,\sigma)$ 
provides further obstructions for an infinite nilpotent group $G$ to embed into the automorphism group of a shift. 

We would be remiss were we not to acknowledge that examples of
non-abelian lattice actions as shift automorphisms are sorely
lacking; in the setting of low complexity (zero entropy), we have none and  
for positive entropy shifts, we can not rule out some of the simplest non-abelian groups. It is conceivable that few or no such actions exist.  Even if this 
turns out to be the case, it is our hope that some of our results furnish the first steps towards non-existence proofs.

We conclude with several open questions, primarily on what sorts of restrictions 
can be placed on the automorphism group   of a shift.  

\section{Background on shifts}
\label{sec:background}

\subsection{One-dimensional subshifts and automorphisms}
\label{subsec:autos}
We assume throughout that $\Sigma$ is a finite set endowed 
with the discrete topology, and $\Sigma^\Z$ is endowed with the product
topology. For $x \in \Sigma^\Z$, we write $x[n] \in \Sigma$ for the value of $x$ at $n \in \Z$.

The {\em left shift} $\sigma\colon \Sigma^\Z \to \Sigma^\Z$ is defined by 
$(\sigma x)[n] = x[n+1]$, and is a homeomorphism from $\Sigma^\Z$ to itself.
The pair $(X, \sigma)$ is a {\em \shiftno}, or just a {\em shift} when the context is clear, if 
$X\subset \Sigma^\Z$ is a closed set that is invariant under the
left shift $\sigma\colon\Sigma^\Z\to\Sigma^\Z$.  

The system $(X, \sigma)$ is said to be {\em minimal} if the orbit closure of any $x\in X$ is all of $X$.  

An {\em automorphism} of the shift
$(X, \sigma)$ is a homeomorphism $\phi\colon X\to X$ such that
$\phi\circ\sigma = \sigma\circ\phi$.  The group of all
automorphisms of $(X, \sigma)$ is denoted $\Aut(X, \sigma)$, or simply $\Aut(X)$ when $\sigma$ 
is clear from the context.

A map $\phi\colon X\to X$ is a {\em sliding block code} if there exists $R\in\N$ such that 
for any $x,y\in X$ with $x[i] = y[i]$ for $-R \le i \le R$, we have that $\phi(x)[0] = \phi(y)[0]$.
The least $R$ such that this holds is called the {\em range} of $\phi$. 

By the Curtis-Hedlund-Lyndon Theorem~\cite{H}, any automorphism $\phi\colon X\to X$ 
of a shift $(X,\sigma)$ is a sliding block code.
In particular, $\Aut(X)$ is always countable. 

\subsection{The language and complexity of a one-dimensional  subshift}
 The {\em words $\CL_k(X)$ of length $k$  in $X$} are defined to 
be the collection of all $a_1, \dots, a_k \in \Sigma^{k}$ such that
there exist $x \in X$ and $m\in \Z$ with $x[m+i] = a_i$ for $1 \le i \le k$.
The length of a word $w \in \CL(X)$ is denoted by $|w|$.
The language $\CL(X)=\bigcup_{k=1}^\infty\CL_k(X)$ is defined 
to be the collection of all finite words. 

A word  $w \in \CL(X)$  is said to be {\em right  special} (respectively, {\em left special}) if  it can be  extended in the language  in at least two distinct ways to the right (respectively, to the left).  
Thus $w$ is right special if  $\vert \{ x \in \Sigma\colon 
wx \in \CL(X)\} \vert \ge 2$  and $w$ is left special if 
$\vert \{ x \in \Sigma\colon  xw \in \CL(X)\} \vert \ge 2$.  A well-known consequence of the work of  Morse and Hedlund~\cite{MH1}  is that every infinite shift admits a right special word of length $n$ for every $n\geq 1$ 
(similarly for left special words).

The {\em complexity} $P_X\colon \N\to\N$ of the shift $(X, \sigma)$ counts the number of 
words of length $n$ in the language of $X$.  Thus 
$$
P_X(n) = \big \vert\CL_n(X) \big \vert.
$$
The exponential growth rate of the complexity is
the {\em topological entropy $h_{\topo}$} of the shift $\sigma$. Thus
\[
h_{\topo}(\sigma) = \lim_{n \to \infty} \frac{\log(P_X(n))}{n}.
\]
This is equivalent to the usual definition of topological entropy using $(n,\varepsilon)$-separated sets (see, for example~\cite{LM}). 

\subsection{Two-dimensional \shifts} 
With minor modifications, the previous notions may be extended to
higher dimensions. For our needs dimension two suffices, and so we
specialize to that case. The set of
functions $\eta \colon \Z^2 \to \Sigma$ is $\Sigma^{\Z^2}$ endowed
with the product topology. The {\em shift action} of $\Z^2$ on
$\Sigma^{\Z^2}$ is given by $(\sigma^z (\eta))(\cdot) := \eta(\cdot -z)$ for
every $\eta \in \Sigma^{\Z^2}$, $z\in \Z^2$. Every $\sigma^z$ is a
homeomorphism on $\Sigma^{\Z^2}$. A {\em two-dimensional shift} is a
closed subset $X \subset \Sigma^{\Z^2}$ invariant by the shift
action. To avoid confusion with the one-dimensional case we denote by
$(X, \sigma|_{X}, \Z^2)$ the associated dynamical system.

A function $\eta \in \Sigma^{\Z^2}$ is said {\em vertically (resp. horizontally) periodic}  if it is a periodic point for $\sigma^{(0,1)}$ (resp. $\sigma^{(1,0)}$).
We say that a subset $A \subset \Z^2$ {\em codes} a subset $B \subset \Z^2$ if for any $\eta, \theta \in X \subset \Sigma^{\Z^2}$ coinciding on the  set $A$ (in other words,  $\eta|_{A} = \theta|_{A}$)  it follows that $\eta$ and $\theta$ coincide on $B$ (meaning that $\eta|_{B} = \theta|_{B}$).

We give a definition of the   {\em complexity function}  $P_X\colon \{\text{finite subsets of $\Z^2$}\}\to\N$, which is analogous to that for one-dimensional shifts.
Namely, for each finite set $\CS \subset \Z^2$  the value $P_X(\CS)$ is  defined to be  the 
number of legal $X$ colorings of the finite set $\CS\subset\Z^2$.

If $\overline{\mathcal O}_{\Z^2}(\eta)$ denotes the closure of the shift orbit of $\eta \in \Sigma^{\Z^2}$,  
$P_{\overline{\mathcal O}_{\Z^2}(\eta)}(n,k)$ (simply denoted $P_{\eta} (n,k)$)  is the number of distinct 
colorings of  $n\times k$ rectangles $ R(n,k) =\{0, \cdots, n-1\} \times \{0, \cdots, k-1\} \subset \Z^{2}$ which occur in $\overline{\mathcal O}_{\Z^2}(\eta)$, or equivalently the
number of distinct $\eta$-colorings among the sets $R(n,k)+z$ for
$z \in \Z^2$.

\section{Subgroups of the automorphism group}
\label{sec:distortion}

\subsection{Group distortion}
\label{subsec:group-distortion}

\begin{defn}
If  $G$ is a   countable group, 
an element $g\in G$ with infinite order is {\em (group) distorted} if there exists a finite set $S \subset G$ such that
\[
\lim_{n \to \infty} \frac{\ell_{S}(g^n)}{n} = 0, 
\]
where $\ell_{S}(g)$ denotes the length of the shortest presentation of $g$ by elements of $S$ (meaning the word length metric on the group $\langle S \rangle$ generated by $S$  with respect to the generating set $S$). 
\end{defn}

Note that since $\ell_{S}(\cdot)$ is subadditive, this limit exists by
Fekete's Lemma.  Furthermore, this definition also makes sense in a non-finitely generated group $G$. 
Also observe that any  (positive or negative) power  or root of a distorted element is still distorted.

An example  of a distorted element is provided by the {\em discrete Heisenberg group} $\mathcal{H}$,  defined by
\begin{eqnarray}\label{eq:defHeisenberg}
\mathcal{H} =\langle s,t,u\colon su=us, ts =st, [u,t] =utu^{-1}t^{-1} =s\rangle.
 \end{eqnarray}

One can check that for any $n\in\Z$, 
we have that 
$$s^{n^2} = [u^n,t^n] = u^{n}t^{n}u^{-n}t^{-n}$$
and so $s$ is a distorted element of infinite order.

In a similar way, for an automorphism $\phi$ the following limit, called the {\em asymptotic range}, 
$$\rr_{\infty} (\phi) := \lim_{n\to\infty} \frac{\rr(\phi^n)}n = \inf_n\frac{\rr(\phi^n)}n ,$$
exists
(note that the sequence of ranges  $(\rr(\phi^n))_{n\in\N}$ is subadditive). This can be interpreted as the average increase of the range  along powers of $\phi$.  For instance, for the shift map $\sigma$ on an infinite  shift $X$, we trivially  have that for $n \ge 1$, $\rr(\sigma^n) \le n$. 
Since there always exists a right special word of every length, and so in particular of length $2n-1$, 
it follows that   $\rr(\sigma^n) = n$ and so $\rr_{\infty} (\sigma) =1$.

\begin{defn}
An element of $\Aut(X)$ is {\em range distorted} if its asymptotic range is $0$. 
\end{defn}
It follows immediately from the definition that any  power or root of a range distorted automorphism is still range distorted. 

\begin{defn}
For a finite set $S \subset  \Aut(X)$, the {\em range $\cR_{S}$} of the generating set $S$ is defined to be 
\[
\cR_{S} =  \max_{g \in S} \rr(g).
\]
\end{defn}

We check that 
if $g\in G$ is group distorted, then
it is also range distorted: 
\begin{prop}\label{prop: distortion}
If  $G$ is a finitely generated subgroup of $\Aut(X)$ and $g \in G$ is distorted, 
then $g$ is also range distorted and its topological entropy $h_{\topo}(g) = 0$.
\end{prop}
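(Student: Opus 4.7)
The plan is to use the elementary sub-additivity of the range of a sliding block code under composition, reduce range distortion to group distortion via the ``block-code length'' of a word in the generators, and then invoke the entropy result from \cite{CFK} for the final claim.

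First I would record the key inequality: if $\phi, \psi \in \Aut(X)$ are sliding block codes of ranges $R_1$ and $R_2$, then $\phi \circ \psi$ is a sliding block code of range at most $R_1 + R_2$. Indeed, the value of $(\phi\circ\psi)(x)[0]$ is determined by $\psi(x)[-R_1,\dots,R_1]$, which in turn is determined by $x[-R_1-R_2,\dots,R_1+R_2]$. Iterating, for any finite product $\phi_1\cdots\phi_k$ of elements of $\Aut(X)$ we obtain $\rr(\phi_1\cdots\phi_k) \le \sum_{j=1}^k \rr(\phi_j)$.

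Now let $S \subset G$ be a finite set realizing the distortion of $g$, so $\ell_S(g^n)/n \to 0$. Enlarging $S$ if necessary, we may assume $S$ is contained in a finite symmetric generating set of $G$ (and use the same letter $S$ for it); this can only decrease $\ell_S(g^n)$ and hence preserves distortion. For each $n$, write $g^n$ as a product of $\ell_S(g^n)$ elements of $S$. Applying the sub-additivity inequality above with $\cR_S = \max_{s \in S} \rr(s)$ yields
\[
\rr(g^n) \;\le\; \cR_S \cdot \ell_S(g^n).
\]
Dividing by $n$ and letting $n \to \infty$ gives $\rr_\infty(g) = \lim_{n\to\infty} \rr(g^n)/n = 0$, so $g$ is range distorted.

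For the entropy statement, note that $g$ has infinite order by the hypothesis that it is distorted. Combining this with range distortion, the result of \cite{CFK} cited in the introduction (that every range-distorted infinite-order automorphism has $h_{\topo}(g)=0$) gives the conclusion. I do not anticipate a real obstacle: the argument is essentially a one-line subadditivity estimate plus a citation. The only point requiring a little care is ensuring that when we pass from the distorting set $S$ to a presentation of $g^n$, each letter contributes at most $\cR_S$ to the range, which is exactly what the Curtis--Hedlund--Lyndon description of automorphisms as sliding block codes provides.
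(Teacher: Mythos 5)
Your proposal is correct and follows essentially the same route as the paper's own proof: the subadditivity bound $\rr(g^n)\le \cR_S\,\ell_S(g^n)$ followed by division by $n$, with the entropy claim delegated to Theorem 5.13 of \cite{CFK}. No issues.
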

\begin{proof}
Let $S$ denote a symmetric generating set for $G$.  
For all $g_1, g_2\in \Aut(X)$, the range satisfies $\rr(g_1g_2) \le \rr(g_1) + \rr(g_2)$, and so 
it follows that for all $m \in \N$
\[
\rr(g^m) \le \ell_{S}(g^m) \cR_{S}, 
\]
since the element $g$ is
group distorted in $G$.
Moreover, from the fact that $g$ is range
distorted it is not difficult to show that
$h_{\topo}(g) = 0$. This is done, for example in 
 Theorem 5.13 of ~\cite{CFK}.
\end{proof}
However, we do not know if the converse holds, namely if
a range distorted  element of $\Aut(X)$ is a distortion element in the
group $\Aut(X)$.

A consequence of Proposition~\ref{prop: distortion} is that for infinite $X$, 
 the shift map $\sigma$ is never distorted in $\Aut(X)$.  
Of independent interest, since the center of the Heisenberg group is $\langle s \rangle$,  we have: 

\begin{cor}\label{cor:subaction}
Let  $T\colon \mathcal{H} \to \Homeo (Z)$ be a homomorphism from
the Heisenberg group to the
group of self homeomorphisms of a zero-dimensional, compact
metric space $Z$.  Then the subaction $(Z,T({s}))$ is expansive only if $Z$ is finite.
\end{cor}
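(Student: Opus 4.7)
The plan is to reduce the corollary to the observation, recorded in the paper immediately after Proposition~\ref{prop: distortion}, that the shift map of an infinite subshift is never distorted in its own automorphism group. The key step is to convert the expansive action of $T(s)$ on the zero-dimensional space $Z$ into the shift action on a subshift, so that the rest of the Heisenberg action becomes a family of commuting automorphisms of that subshift, and the distortion of $s$ transports directly.

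First I would invoke the standard fact that every expansive homeomorphism of a compact, zero-dimensional, metric space is topologically conjugate to a subshift. Concretely, choose a finite clopen partition $\{U_{1},\dots,U_{k}\}$ of $Z$ whose mesh is smaller than an expansive constant for $T(s)$, label the pieces by an alphabet $\Sigma$, and define an itinerary map $\pi\colon Z \to \Sigma^{\Z}$ by $\pi(z)[n]=i$ whenever $T(s)^{n}(z)\in U_{i}$. Clopenness of the $U_{i}$ gives continuity, expansiveness gives injectivity, and compactness plus Hausdorffness then yield a topological conjugacy $\pi\colon (Z, T(s)) \to (X, \sigma)$ with $X:=\pi(Z)$ a subshift of $\Sigma^{\Z}$.

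Since $s$ is central in $\mathcal{H}$, the homeomorphism $T(g)$ commutes with $T(s)$ for every $g\in\mathcal{H}$, and so $\tilde{T}(g):=\pi\circ T(g)\circ \pi^{-1}$ commutes with $\sigma$. Thus $\tilde{T}\colon \mathcal{H}\to \Aut(X,\sigma)$ is a homomorphism with $\tilde{T}(s)=\sigma$, and its image is the finitely generated subgroup $\la \tilde{T}(s),\tilde{T}(t),\tilde{T}(u)\ra$ of $\Aut(X)$.

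Finally, the identity $s^{n^{2}}=[u^{n},t^{n}]$ noted in the paper shows that $s$ is distorted in $\mathcal{H}$; pushing through $\tilde{T}$ gives $\ell_{\tilde{T}(S)}(\sigma^{n})/n\to 0$, so $\sigma$ is distorted in the finitely generated subgroup $\tilde{T}(\mathcal{H})\subset \Aut(X)$, provided $\sigma$ has infinite order. By the remark following Proposition~\ref{prop: distortion}, this forces $X$ to be finite, hence $Z$ finite. The one point requiring attention is the degenerate case in which $\sigma$ has finite order; but then $\sigma^{N}=\id$ on $X$ means every element of $X$ is periodic of period dividing $N$, of which there are only finitely many in $\Sigma^{\Z}$, so again $X$ and thus $Z$ is finite. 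The only step I expect to need any real care is the clean identification $\tilde{T}(s)=\sigma$, since the rest is a direct application of the tools already assembled in the excerpt.
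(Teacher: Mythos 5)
Your proposal is correct and follows essentially the same route as the paper: conjugate the expansive zero-dimensional system $(Z,T(s))$ to a subshift (the paper cites Keynes--Robertson for the itinerary-map construction you spell out), use centrality of $s$ to push $T(\mathcal{H})$ into $\Aut(X)$ with $\sigma$ as the image of the distorted element $s$, and conclude via Proposition~\ref{prop: distortion} that an infinite subshift's shift map cannot be distorted. Your explicit treatment of the finite-order case for $\sigma$ is a small extra precision the paper leaves implicit, but the argument is the same.
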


\begin{proof}
Assume that $(Z,T({s}))$ is expansive. Then it is conjugate to a \shift 
$(X, \sigma)$ by~\cite{KR}. Since $s$ lies in the center of $\mathcal{H}$, the conjugacy maps every element of $T(\mathcal{H})$ into $\Aut(X)$. Since  $T(s)$ is a distorted element, we have that $\sigma$ is range distorted and hence $X$ is finite.
\end{proof}

\begin{defn}\label{def:log}
For a finitely generated group $G$ with generating
set $S$, the element $g \in G$ has {\em exponential distortion} if it has infinite order and
there exists $C > 0$ such that for all sufficiently large $m$, 
\[
\ell_{S}(g^m) \le C\log(m), 
\]
where $\ell_{S}(\cdot)$ denotes the word length of the element in the generating set $S$. 

The smallest such $C$ satisfying this inequality (with the fixed 
generating set $S$) is denoted $\C_{S}$.
\end{defn}

Note that the property of an element having exponential distortion is
independent of the generating set $S$, depending only on the algebraic properties of the group.  
However,  the constant $C$ depends on the
choice of  generators $S$.  

We also say   an element $g$ of infinite order  has {\em polynomial distortion} whenever  $\ell_{S}(g^n) = \Bo(n^{1/d})$ for some finite set $S \subset G$ and integer $d\ge 1$.
Similarly, an automorphism $\phi$ is {\em exponentially} (respectively {\em polynomially}) range  distorted if $\rr(\phi^n) =\Bo(\log n)$ (respectively $\Bo(n^{1/d})$).   

\begin{exs}
The following groups have elements  with exponential distortion :
\begin{itemize}
\item ${\rm SL}(k,\Z)$ for any $k\geq 3$  (see~\cite{LMR}).
\item ${\rm SL}(2, \Z[1/p])$, for any prime $p$ (see~\cite{LMR}).
\item The Baumslag-Solitar group $\BS(1,n) = \langle a,b \colon b a b^{-1} = a^n\rangle$.
\end{itemize}
\end{exs}

To see this for the Baumslag-Solitar group
$\BS(1,n) = \langle a,b \colon b a b^{-1} = a^n\rangle$ with   $n>1$, take the generators $S =\{a, b, b^{-1}\}$.  
Then for any integer $m \ge 2$, write $m$ in base $n$: $m= \alpha_0+\alpha_1 n+\cdots+\alpha_k n^k$ where each $\alpha_{i} \in \{0, \ldots, n-1\}$. 
Using the H\"orner's method, $m=n\cdot(n\cdot(n\cdots(\alpha_{k-1}+n\alpha_k)+\alpha_{k-2})+\cdots+\alpha_1)+\alpha_0$, which implies 
$ a^m= b^ka^{\alpha_{k}}b^{-1}a^{\alpha_{k-1}}b^{-1} \cdots b^{-1}a^{\alpha_{0}}$ and $\ell_{S}(a^m) \le k+n(k+1) +k = \Bo (\log m).$

We show (Corollary~\ref{cor}) that $\BS(1,n)$ does
not embed in the automorphism group of any shift of zero entropy. 

An example of
Hochman~\cite{hochman} gives  a subshift of  polynomial complexity with  an automorphism of infinite order that is  (polynomially) range
distorted but the full automorphism group of the shift constructed is
not explicit and so it is unknown (to us) if this
automorphism is group distorted.

\subsection{Entropy obstructions to embedding}

For a subgroup $G$ of $\Aut(X)$ containing an element $\phi$ with exponential
distortion, the two quantities $\cR_{S}$ and $\C_{S}(\phi)$ determine a 
lower bound on the possible entropy of the shift $(X, \sigma).$ 

\begin{thm}\label{range}
Let  $(X,\sigma)$ be a subshift 
and $\phi\in \Aut(X)$ an element  of infinite order such that for some constant $\cR >0$, $\rr(\phi^m) \le \cR \log(m)$ for each $m \ge 1$.
Then 
\[
h_{\topo}(\sigma) \ge \frac{1}{2\cR}.
\]
\end{thm}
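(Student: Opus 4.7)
The plan is to convert the distinct iterates of $\phi$ into a lower bound on the complexity function, then pass to entropy. Fix a large integer $M \ge 2$ and set $r := \lceil \cR \log M \rceil$, $n := 2r+1$. Because $\phi$ has infinite order, the iterates $\phi^1, \ldots, \phi^M$ are pairwise distinct, and by hypothesis each has range at most $r$. By Curtis--Hedlund--Lyndon, each $\phi^i$ is determined by a block-code function $f_i \colon \CL_n(X) \to \Sigma$, and these $M$ functions are pairwise distinct.

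The heart of the argument is to deduce from this that
\[
P_X(n) \;\ge\; M.
\]
Granted this, the entropy bound is immediate: for the sequence of lengths $n = n_M = 2\cR\log M + 1$,
\[
\frac{\log P_X(n)}{n} \;\ge\; \frac{\log M}{2\cR\log M + 1}\;\xrightarrow[M\to\infty]{}\;\frac{1}{2\cR},
\]
and since $h_{\topo}(\sigma)=\lim_{n\to\infty}\log P_X(n)/n$ exists, taking the limit along the subsequence $n_M$ yields $h_{\topo}(\sigma)\ge 1/(2\cR)$.

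To establish $P_X(n)\ge M$, I would pick a point $x_0\in X$ whose $\phi$-orbit has cardinality at least $M$ (such $x_0$ exists because $\phi$ has infinite order, so orbits are unbounded in size). Then the $M$ images $\phi^i(x_0)$ for $i=1,\ldots,M$ are pairwise distinct points of $X$, and for each pair $i\ne j$ there is some position $p_{ij}\in\Z$ at which $\phi^i(x_0)$ and $\phi^j(x_0)$ disagree. The goal is to produce an integer $k$ for which every $p_{ij}$ lies in the window $[k-r,k+r]$ of length $n$; the $M$ length-$n$ blocks $\phi^i(x_0)|_{[k-r,k+r]}$ are then pairwise distinct elements of $\CL_n(X)$.

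The main obstacle is exactly this combinatorial step. The naive count of range-$r$ block codes gives only $M \le |\Sigma|^{P_X(n)}$, which yields $P_X(n)\ge \log M/\log|\Sigma|$; this linear complexity bound produces only the trivial entropy estimate $h_{\topo}(\sigma)\ge 0$. Upgrading to the exponential bound $P_X(n)\ge M$ requires exploiting that $\phi^1,\ldots,\phi^M$ are iterates of a single automorphism: to argue that the closed sets $F_{ij}:=\{y\in X:\phi^i(y)|_{[-r,r]}=\phi^j(y)|_{[-r,r]}\}$ cannot cover $X$. Since each $F_{ij}$ is closed and the corresponding group element $\phi^{i-j}$ is non-trivial, one would like a topological or Baire-category argument to find a point $x_0$ outside $\bigcup_{i\ne j}F_{ij}$ (after a suitable shift), or equivalently to cluster all pairwise-disagreement positions $p_{ij}$ inside a single window of length $n$. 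This is the delicate point where the specific structure of the cyclic subgroup generated by $\phi$, together with the range bound $\rr(\phi^m)\le \cR\log m$, must be used beyond the mere cardinality count.
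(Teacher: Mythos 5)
Your reduction of the entropy bound to the combinatorial inequality $P_X(2r+1) \ge M$ with $r = \lceil \cR\log M\rceil$ is exactly the right target, and your passage from that inequality to $h_{\topo}(\sigma)\ge 1/(2\cR)$ is fine. But the inequality itself --- which you correctly identify as ``the heart of the argument'' --- is never proved, and the route you sketch for it does not work as stated. You propose to find a single point $x_0$ and a single window of length $2r+1$ in which the $M$ points $\phi^1(x_0),\dots,\phi^M(x_0)$ are pairwise distinct, via a Baire-category argument on the closed sets $F_{ij}=\{y:\phi^i(y)|_{[-r,r]}=\phi^j(y)|_{[-r,r]}\}$. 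Each $F_{ij}$ is indeed a proper closed subset of $X$ (by shift-commutation), but proper closed sets need not have empty interior, so finitely many of them can perfectly well cover $X$; Baire category gives you nothing here, and no alternative mechanism is supplied. The phrase ``this is the delicate point where the specific structure \dots must be used'' is an acknowledgment of the gap, not a proof.

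The paper closes this gap by a different mechanism that avoids looking for a witnessing point altogether. Form the $\phi$-spacetime and observe that, since $\rr(\phi^m)\le r$ for $0\le m\le M$, the word $x[-r..r]$ determines the entire column $(\phi^m(x)[0])_{0\le m<M}$; hence the number of distinct height-$M$ columns occurring in the spacetime is at most $P_X(2r+1)$. If $P_X(2r+1)\le M$, then every bi-infinite column $(\phi^m(x)[0])_{m\in\Z}$ has at most $M$ factors of length $M$, so by the Morse--Hedlund theorem every column is periodic with period at most $M$; by Lemma~\ref{lem:Cyr-Kra2b} the period is uniform, forcing $\phi$ to have finite order and contradicting the hypothesis. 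This contradiction argument is what upgrades the ``mere cardinality count'' you rejected to the exponential bound $P_X(2r+1)>M$, and it is the step missing from your proposal.
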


 \begin{proof}
Consider the $\phi$-spacetime $\U$ (see the definition in Section~\ref{subsec:autos})
and let  $V$ be the vertical segment $\{0\} \times \{0, \cdots, 2^n-1\} \subset \Z^2$ of length $2^n$ for $n \ge 1$.

We claim that $V$ is coded by a horizontal segment. 
To prove this, recall that  by definition of the range, if   $x,y \in X$ satisfy  $x[i] = y[i]$ whenever  $|i|  \le \rr(\phi^k)$,  then $\phi^k(x)[0] = \phi^k(y)[0]$. 
So, if $r(n) =\sup_{0\le m \le 2^n} \rr(\phi^m)$,  the horizontal segment 
$H = \{-r(n), \cdots, r(n)\} \times \{0\} $ of length $2r(n) +1$   codes the vertical segment $V$.  Recalling the definition of coding, this means 
that if $\eta, \theta \in\U$ and $\eta |_{H}=\theta |_{H}$, then $\eta |_V=\theta |_V$.

Since $\rr(\phi^m)  \le \cR  \log(m)$ for all $m>0$, it follows that  if  $1 \le m \le 2^n$ and 
$\C_0 = \cR  \log(2)$,  we have that 
\[
\rr(\phi^m)  \le \cR  \log(m) \le \cR  \log(2^n) 
= \C_0  n.
 \]
Thus the horizontal segment of length $2\lfloor \C_{0}n \rfloor +1$ centered at $(0,0)$
codes the vertical segment $V$. We deduce that the number of distinct
vertical words of height $2^n$ that occur in $\U$ is at most
$P_X(2\lfloor \C_{0}n \rfloor+1)$.

Suppose for contradiction that $P_X(2\lfloor \C_{0}n \rfloor+1) \le 2^n$ for some $n\in\N$.  
Then, by  the Morse-Hedlund Theorem~\cite{MH} each  vertical column is  periodic with period at most $2^n$.  
This in turn implies  that $\phi$ has finite order, a contradiction of the hypothesis.  

Thus we have $P_X(2\lfloor \C_{0}n \rfloor+1) > 2^n$ and hence
\[
h_{\topo}(\sigma) = \lim_{n \to \infty} \frac{\log(P_X(n))}{n}
= \lim_{n \to \infty} \frac{\log(P_X(2\lfloor \C_{0}n \rfloor+1))}{2\lfloor \C_{0}n \rfloor+1}
\ge \lim_{n \to \infty} \frac{\log(2^n)}{2\C_0n+1} = \frac{\log(2)}{2\C_0}.
\]
Since $\C_0 = \cR \log(2)$, we conclude that
\[
h_{\topo}(\sigma) \ge \frac{1}{2\cR}.
\qedhere
\]
\end{proof}

\begin{remark}\label{log remark}
Recall that for a finite set $S$ of  generators for a
subgroup $G \subset  \Aut(X)$, the range, $\cR_{S}$, of  $S$ is
defined to be $\cR_{S} =  \max_{g \in S} \rr(g)$.
Also we defined $\C_S(\phi)$ to be the smallest $C$ such that $\ell_{S}(\phi^m) \le C\log(m),$ 
for all $m>0.$

For all $g_1, g_2\in \Aut(X)$, the range satisfies $\rr(g_1g_2) \le \rr(g_1) + \rr(g_2)$.
It follows that for all $m \in \N$,
\[
\rr(g^m) \le \ell_{S}(g^m) \cR_{S} \le \cR_{S} \C_S(\phi)\log(m).
\]
Hence the number $\cR := \cR_S \C_S(\phi)$ satisfies the 
hypothesis of Theorem~\ref{range} and we conclude that
\[
h_{\topo}(\sigma) \ge \frac{1}{2 \cR_{S} \C_S(\phi)}.
\]

The quantity $\C_{S}(\phi)$ depends only on the algebraic properties of the
abstract group $G$ and not on the realizations of these 
automorphisms as sliding block codes, whereas 
$\cR_{S}$ depends only on the range of the sliding block code generators of 
$S$.
\end{remark}

In a private communication, Hochman indicated how to modify the  construction in~\cite{hochman} to obtain  an infinite order, exponentially range distorted automorphism.

Recall that a group $G$ is {\em almost simple} if every normal subgroup
is either finite or has finite index.  The Margulis normal
subgroups theorem (see~\cite{Mar}) implies that many Lie group lattices
are almost simple (including for example ${\rm SL}(n,\Z)$ for $n \ge 3$).

\begin{cor}\label{cor}
Let  $(X,\sigma)$ be a shift with
zero entropy.  Suppose
$G$ is   group and some element $g\in G$ has
exponential distortion.
Then if $\Phi\colon G \to \Aut(X)$ is a homomorphism, the element 
$\Phi(g) \in \Aut(X)$ has finite order.  \\
Moreover, if $G$ is almost simple, then $\Phi(G)$ is a finite group.
\end{cor}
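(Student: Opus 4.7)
The plan is to argue by contradiction for the first assertion, using Theorem~\ref{range} together with the bound in Remark~\ref{log remark}. Suppose $\Phi(g)$ has infinite order. Since $g$ has exponential distortion in $G$, there is a finite set $S\subset G$ (which we may enlarge to contain $g$) and a constant $C>0$ such that $\ell_S(g^m)\le C\log(m)$ for all $m$ sufficiently large. Let $H=\langle S\rangle$, a finitely generated subgroup of $G$, and consider the finitely generated subgroup $\Phi(H)\subset\Aut(X)$, generated by $\Phi(S)$. For any expression $g^m = s_{i_1}\cdots s_{i_\ell}$ with $\ell = \ell_S(g^m)$, applying $\Phi$ yields $\Phi(g)^m = \Phi(s_{i_1})\cdots\Phi(s_{i_\ell})$, so $\ell_{\Phi(S)}(\Phi(g)^m)\le \ell_S(g^m)\le C\log(m)$.

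Next I would transfer this to the range. Using subadditivity of the range under composition, as in Remark~\ref{log remark},
\[
\rr(\Phi(g)^m)\le \ell_{\Phi(S)}(\Phi(g)^m)\cdot \cR_{\Phi(S)}\le C\,\cR_{\Phi(S)}\log(m).
\]
Since $\Phi(g)$ is assumed to have infinite order, Theorem~\ref{range} applies with $\cR = C\,\cR_{\Phi(S)}$, giving
\[
h_{\topo}(\sigma)\ge \frac{1}{2C\,\cR_{\Phi(S)}}>0,
\]
contradicting the hypothesis $h_{\topo}(\sigma)=0$. Hence $\Phi(g)$ has finite order, proving the first assertion.

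For the moreover statement, assume in addition that $G$ is almost simple. The kernel $\ker(\Phi)$ is a normal subgroup of $G$, so by hypothesis it is either finite or of finite index. Because $\Phi(g)$ has finite order by the first part, there exists $k\ge 1$ with $g^k\in\ker(\Phi)$. Since any positive power of a distorted element of infinite order is again an infinite-order element (and indeed still distorted), $g^k$ has infinite order, so $\ker(\Phi)$ is infinite. The almost simple hypothesis then forces $\ker(\Phi)$ to have finite index in $G$, and therefore $\Phi(G)\cong G/\ker(\Phi)$ is a finite group.

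The substantive step is the first paragraph, where one must lift the algebraic distortion of $g$ in $G$ through the homomorphism $\Phi$ to geometric distortion of the range of $\Phi(g)$ in $\Aut(X)$; the only care required is the observation that $\ell_{\Phi(S)}(\Phi(g)^m)\le \ell_S(g^m)$, after which the work is already done by Theorem~\ref{range}. The almost simple case is then a clean algebraic consequence once one notes that $\ker(\Phi)$ necessarily contains the infinite-order element $g^k$.
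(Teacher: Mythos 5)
Your proof is correct and follows essentially the same route as the paper: push the word-length bound $\ell_S(g^m)\le C\log m$ through $\Phi$, convert it to a range bound via subadditivity (the content of Remark~\ref{log remark}), contradict Theorem~\ref{range} using $h_{\topo}(\sigma)=0$, and then handle the almost simple case by observing that $\ker(\Phi)$ contains the infinite cyclic group generated by some power of $g$. The only difference is that you make explicit the (correct, and worth stating) observation that $\ell_{\Phi(S)}(\Phi(g)^m)\le\ell_S(g^m)$, which the paper leaves implicit.
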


\begin{proof}
If $\phi = \Phi(g)$ is not of finite order, then it is an element
with exponential distortion in the  subgroup $\Phi(G)$ of $\Aut(X)$.  
Moreover, since the range is subadditive, there are  a finite set $S \subset \Phi(G)$  and  positive constants $\cR_{S}$, ${\mathcal C}_{S}$, such that  $\rr(\phi^k) \le \cR_{S} {\mathcal C}_{S} \log(k)$ for each $k \ge 1$. (See Remark~\ref{log remark}.)
This assumption would contradict Theorem~\ref{range}.

Suppose now that $G$ is an almost simple group and
$\Phi\colon G \to \Aut(X)$ is a homomorphism.  If  $g \in G$  has exponential
distortion then, as above, $\phi = \Phi(g)$ has finite order.
So the kernel $K$ of $\Phi$ 
contains infinitely many distinct powers of $g$ and, in particular, $K$ is infinite. 
But since $G$ is almost simple, this implies $K$ has finite index and we conclude
that $\Phi(G) \cong G/K$ is finite.
\end{proof}

Since ${\rm SL}(k,\Z), k \ge 3$ and the Baumslag-Solitar group $\BS(1,n)$ have elements which
are exponentially distorted,
Corollary~\ref{cor} implies
they are examples of finitely generated groups that do not embed into the 
automorphism group of any shift with zero entropy.  In particular, 
this provides an answer Question 6.1 of~\cite{DDMP}.  
However,  we are unable to give a positive entropy shift for which  ${\rm SL}(k,\Z),
k \ge 3$ or $\BS(1,n)$ do not embed.  

On a related note, if
$m>1$ and $n>m$, then $\BS(m,n)$ is not residually
finite~\cite{Mes}. Thus if $X$ is a mixing shift of finite type, then $\BS(m,n)$
does not embed in $\Aut(X)$.

\section{Torsion free nilpotent groups}\label{sec:nilpotent}

\subsection{Periodicity in two dimensions}
We recall some results about two-dimensional shifts which we then use to describe properties 
of the automorphism group of a one-dimensional shift.

\begin{thm}[Cyr \& Kra~\cite{CK}]\label{Cyr-Kra}
Let $\eta\colon\Z^2\to\Sigma$ and suppose there exist $n, k\in\N$ such that $P_{\eta}(n,k) \le \frac{nk}{2}$.  Then there exists $(i,j)\in\Z^2\setminus\{(0,0)\}$ such that $\eta(x+i,y+j)=\eta(x,y)$ for all $(x,y)\in\Z^2$. 
\end{thm}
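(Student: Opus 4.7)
The plan is to view the hypothesis $P_\eta(n,k) \le nk/2$ as the two-dimensional analog of the Morse--Hedlund condition $P(n)\le n$ and to mimic, as far as possible, the 1D proof. In one dimension, one shows that $P(n)\le n$ forces some $m$ with $P(m+1)=P(m)$, which in turn forces every length-$m$ word to have a unique right-extension and hence the sequence to be (eventually) periodic. The goal in 2D is the analogous implication: that low complexity on a rectangle forces enough local determinacy in $X:=\overline{\mathcal O}_{\Z^2}(\eta)$ to yield a global period vector.

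First I would replace $\eta$ by its orbit closure $X$, recording that $P_X(n,k)=P_\eta(n,k)\le nk/2$. Then I would establish a cell-counting lemma: if for every $(i,j)\in R(n,k)$ the value at $(i,j)$ is \emph{not} determined by the values on $R(n,k)\setminus\{(i,j)\}$ among legal patterns, then a branching argument over the $nk$ cells produces strictly more than $nk/2$ distinct patterns, contradicting the hypothesis. Taking the contrapositive, there is some cell $(i_0,j_0)$ whose value is forced by its complement; equivalently, $R(n,k)\setminus\{(i_0,j_0)\}$ \emph{codes} $\{(i_0,j_0)\}$ in the sense of Section~\ref{sec:background}. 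The next step would be to propagate this local forcing: by comparing two distinct but largely overlapping $R(n,k)$-patterns appearing as sub-blocks of configurations in $X$, extract a candidate shift vector $v\in\Z^2\setminus\{(0,0)\}$ relating them, and use the forcing relation to show that some element of $X$ is $v$-periodic. Finally, transfer the period back to $\eta$ itself by exploiting that $\eta$ lies in the $\Z^2$-invariant closed set $X$ together with the fact that the set of $v$-periodic points of $X$ is closed and shift-invariant.

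The main obstacle will be the propagation step. In one dimension a single forced extension immediately spreads along the line $\Z$, and the pigeonhole on finitely many length-$n$ words produces a period. In two dimensions there are infinitely many paths between any two cells, and local forcing relations may conflict along different paths; controlling these potential conflicts is the hard part and is where the precise strength of the bound $nk/2$ (rather than merely $nk$) must enter. I expect that a serious attempt would need to combine the cell-counting lemma above with an analysis of two-dimensional ``special'' patterns analogous to the right- and left-special words used in the 1D Morse--Hedlund proof, and would likely proceed by first establishing one-dimensional periodicity along some row, column, or diagonal of $\eta$ before upgrading to full two-dimensional periodicity.
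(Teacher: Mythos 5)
This theorem is not proved in the present paper: it is quoted, with attribution, as the main result of Cyr and Kra's paper on Nivat's conjecture~\cite{CK}, whose proof is long and rests on an analysis of nonexpansive directions and balanced sets for the $\Z^2$-action, not on a direct imitation of the one-dimensional Morse--Hedlund argument. Measured against that, your proposal is an outline with an acknowledged hole exactly where the content of the theorem lies. Your cell-counting step is right in spirit, though the cleaner standard form refines $\emptyset=S_0\subset S_1\subset\cdots\subset S_{nk}=R(n,k)$ one cell at a time and notes that if the pattern count strictly increases at every step then $P_\eta(n,k)\ge nk+1$, so the hypothesis forces many coding relations (your version, in which each cell individually fails to be coded by its complement, does not obviously sum to a lower bound of $nk/2$). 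But the ``propagation'' step is precisely the part that has resisted every naive approach --- with the slightly weaker hypothesis $P_\eta(n,k)\le nk$ the conclusion is Nivat's conjecture, which remains open --- and you supply no argument for it, only a description of why it is hard. In one dimension, unique right-extension of every length-$n$ word determines the whole sequence from a single window, and pigeonhole on finitely many windows along $\Z$ yields a period; in two dimensions a coded cell inside $R(n,k)$ only lets you extend along certain directions, the location of the coded cell can vary with the pattern, and two occurrences of the same $R(n,k)$-pattern at different positions do not force the configurations to agree anywhere beyond those rectangles. Bridging this is the theorem, and it is missing.

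There is also a concrete error in your final step. The set of $v$-periodic points of $X=\overline{\mathcal O}_{\Z^2}(\eta)$ is indeed closed and shift-invariant, but a closed invariant subset of $X$ containing one periodic point need not contain $\eta$: orbit closures of aperiodic configurations routinely contain periodic points. Producing a $v$-periodic element somewhere in $X$ is strictly weaker than the stated conclusion, which asserts that $\eta$ itself is periodic, so the ``transfer back to $\eta$'' cannot be done by the invariance argument you describe. For the purposes of the present paper you should simply cite~\cite{CK} for this statement rather than attempt to reprove it.
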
 


\begin{lemma}\label{lem:Cyr-Kra2b}
Let $(X, \sigma, \Z^{2})$ be a two-dimensional subshift such that each element is vertically periodic. 
Then there exists a constant $T>0$ such that each element of $X$ is fixed by $\sigma^{(0,T)}$. 
\end{lemma}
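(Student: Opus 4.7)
The plan is to combine the Baire category theorem with a compactness argument. For each $T \ge 1$ I will set $X_T := \{\eta \in X : \sigma^{(0,T)}\eta = \eta\}$, a closed $\Z^2$-invariant subset of $X$ (both coordinate shifts preserve the fixed-point condition), and by hypothesis $X = \bigcup_{T \ge 1} X_T$. Since the sequence $(X_{n!})_{n \ge 1}$ is nested increasing with union $X$ and $X$ is compact Hausdorff, Baire yields some $n_0$ for which $X_{n_0!}$ has nonempty interior in $X$. Hence there is a cylinder $[w]_A \cap X \subset X_{n_0!}$ for a finite pattern $w$ on a finite $A \subset \Z^2$, and by $\Z^2$-invariance of $X_{n_0!}$ any configuration in which $w$ occurs at any shifted position lies in $X_{n_0!}$.

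Setting $U_T := \Int_X(X_T)$, each $U_T$ is open and $\Z^2$-invariant. The goal reduces to showing that $\{U_T\}_{T \ge 1}$ is an open cover of $X$: compactness will then yield a finite subcover $X = U_{T_1} \cup \cdots \cup U_{T_k}$, and $T := \mathrm{lcm}(T_1, \ldots, T_k)$ will satisfy $X \subset X_T$, giving the claim.

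The main obstacle is therefore to prove $\bigcup_T U_T = X$. I would argue by contradiction: if some $\eta \in X$ lies outside every $U_T$, then by the failure of any cylinder around $\eta$ to be contained in a single $X_T$, one extracts a sequence $\xi_n \in X$ with $\xi_n \to \eta$ and minimal vertical periods $T_{\xi_n} \to \infty$, while $\eta$ itself has a finite minimal period $T^*$. Since $\sigma^{(0,T^*)}\xi_n \to \sigma^{(0,T^*)}\eta = \eta$ as well, both $\xi_n$ and $\sigma^{(0,T^*)}\xi_n$ agree on arbitrarily large windows around the origin, yet differ for $n$ large; the ``defect columns'' of $\xi_n$ (indices $i$ for which the $i$-th column is not $T^*$-periodic) therefore lie at horizontal positions $|i_n| \to \infty$. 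Translating by $\sigma^{(-i_n, 0)}$ and applying the pigeonhole principle to the defect row $j_n$ within $[0, T^*)$ to reach a subsequence with $j_n$ constant, I obtain a subsequential limit $\tilde\eta \in X$ whose $0$-th column is not $T^*$-periodic. Verifying that $\tilde\eta$ also lies outside every $U_T$ (since it is accumulated by translates with unbounded vertical periods), and iterating the construction via a diagonal argument over growing scales, I expect to produce a configuration in $X$ exhibiting aperiodicities at infinitely many scales, contradicting the hypothesis that every element of $X$ is vertically periodic.
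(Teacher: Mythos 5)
Your reduction is fine as far as it goes: if the sets $U_T=\Int_X(X_T)$ cover $X$, then compactness and a least common multiple finish the proof. The problem is that the covering claim $\bigcup_T U_T=X$ carries essentially all of the content of the lemma (once the lemma is known, $X=X_T=U_T$ for a single $T$), and the contradiction argument you sketch for it does not close. First, a concrete error: from $\xi_n\to\eta$ and $\sigma^{(0,T^*)}\xi_n\to\eta$ you cannot conclude that the columns of $\xi_n$ which fail to be $T^*$-periodic sit at horizontal positions $|i_n|\to\infty$. Agreement of $\xi_n$ with $\eta$ on the window $[-n,n]^2$ only forces the central columns of $\xi_n$ to look $T^*$-periodic \emph{inside} that window; the defect can perfectly well occur in the column $i=0$ at a vertical height $|j_n|>n$. (Also, pigeonholing ``the defect row $j_n$ within $[0,T^*)$'' presupposes a vertical translation you have not performed.) So the translate you extract need not have its defect where you place it, though one can still arrange, after translating both horizontally and vertically, a limit $\tilde\eta$ that is not $T^*$-periodic.

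The more serious gap is the last step: ``iterating via a diagonal argument to produce a configuration exhibiting aperiodicities at infinitely many scales'' is exactly where the theorem lives, and it is not carried out. The obstruction is that vertical periodicity with \emph{unbounded} period is not a closed condition: a limit of configurations whose minimal vertical periods tend to infinity can perfectly well be vertically periodic with some large period, so producing at each stage a configuration that fails to be $T$-periodic for a given $T$ does not by itself yield a single non-periodic element of $X$. Your argument therefore begs the question precisely at its conclusion. For comparison, the paper sidesteps all of this by passing to the one-dimensional subshift $Z$ of vertical columns: every element of $Z$ is periodic, and an infinite subshift contains arbitrarily long right-special words, whose limits give two distinct points sharing the same past --- impossible when both are periodic. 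Hence $Z$ is finite and a single power of the shift fixes it, which is the desired $T$. Some input of this kind (or another mechanism for converting ``periods unbounded on a closed invariant set'' into an actual aperiodic point) is needed to make your approach work.
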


\begin{proof}
Let $Z$ be the collection of  all the sequences along the vertical columns of elements in $X$. The set $Z$ defines a one-dimensional subshift where each sequence is periodic.

If the subshift $Z$ is infinite, its language  contains arbitrarily long right special words. Taking an accumulation point, there exist two different sequences $x,y \in Z$  sharing the same past. This is impossible because  $x$ and $y$ are both periodic. Hence the set $Z$ is finite. So a power $T$ of the shift map is the identity on $Z$. This shows the lemma. 

\end{proof}

\subsection{Complexity obstructions to embedding}
In this section, we show a subshift with an infinite order
polynomially range distorted automorphism cannot have a sub-polynomial
complexity. Then we deduce a restriction on the complexity of a shift
which contains a nilpotent group in its automorphism group.

We start with a sufficient condition for an automorphism to be non-distorted: 
\begin{lemma}\label{lem:range} 
Let $(X,\sigma)$ be a (one-dimensional) shift and let $\phi\in\Aut(X)$.  If there exist $i,j\in\Z\setminus\{0\}$ and an aperiodic $x\in X$ such that $\phi^i(x)=\sigma^j(x)$, then 
$$ 
\rr(\phi^{im})\geq|j|\cdot m 
$$ 
for all $m\in\N$. In particular $\phi$ is not range distorted.
\end{lemma}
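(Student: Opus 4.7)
The plan is to argue by contradiction: assuming $r:=\rr(\phi^{im})<|j|m$, I would derive that the orbit closure of $x$ consists only of periodic sequences, contradicting the aperiodicity hypothesis.

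First, I would establish the iterated identity $\phi^{im}(x) = \sigma^{jm}(x)$ for every $m\ge 1$ by a routine induction, using the hypothesis together with $\phi\sigma=\sigma\phi$. Setting $Y := \overline{\{\sigma^k(x):k\in\Z\}}$ and noting that $\phi^{im}(\sigma^k x)=\sigma^k\phi^{im}(x)=\sigma^{jm+k}(x)=\sigma^{jm}(\sigma^k x)$, the continuous maps $\phi^{im}$ and $\sigma^{jm}$ coincide on the dense orbit of $x$ and hence on all of $Y$.

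Next, suppose for contradiction that $r<|j|m$, and assume without loss of generality that $j>0$ (the case $j<0$ follows by the symmetric argument extending to the left). Letting $g\colon\Sigma^{2r+1}\to\Sigma$ be the block code defining $\phi^{im}$, the identity $\phi^{im}|_Y=\sigma^{jm}|_Y$ rewrites as
\[
y[n+jm] = g(y[n-r],\ldots,y[n+r])\qquad\text{for every } y\in Y \text{ and every } n\in\Z.
\]
The crucial and (I expect) most delicate step of the argument is the following deterministic extension principle: because $r<jm$, if $y|_{[a,b]}$ is known with $b-a\ge jm+r-1$, then applying the relation at $n=b+1-jm$ yields $[n-r,n+r]\subset[a,b]$ and so determines $y[b+1]$. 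Iterating this rule, $y|_{[a,\infty)}$ is completely determined by the single window $y|_{[a,a+jm+r-1]}$ of length $jm+r$.

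This deterministic extension converts directly into a uniform complexity bound: for every $n\ge jm+r$ the map $\CL_n(Y)\to\CL_{jm+r}(Y)$ sending a word to its length-$(jm+r)$ prefix is injective, giving $P_Y(n)\le|\Sigma|^{jm+r}$ for all $n$. The Morse--Hedlund theorem then forces $Y$ to be finite and every element of $Y$ to be periodic, contradicting the hypothesis that $x\in Y$ is aperiodic. We conclude $\rr(\phi^{im})\ge|j|m$, and the ``in particular'' assertion follows because this lower bound grows linearly in $m$, preventing the asymptotic range $\rr_\infty(\phi)$ from vanishing.
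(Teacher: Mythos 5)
Your proof is correct, and it reaches the same key reduction as the paper --- namely that $\phi^{im}$ and $\sigma^{jm}$ coincide on the infinite orbit closure $Y=\overline{\mathcal{O}(x)}$ --- but it finishes by the contrapositive rather than directly. You assume $r=\rr(\phi^{im})<|j|m$, turn the block-code identity $y[n+jm]=g(y[n-r],\dots,y[n+r])$ into a deterministic right-extension rule (valid because $r<jm$ pushes the target coordinate outside the coding window), deduce a uniform bound $P_Y(n)\le|\Sigma|^{jm+r}$, and invoke Morse--Hedlund to force periodicity, contradicting aperiodicity of $x$. The paper instead argues directly: since $Y$ is infinite it admits a left special word of every length (itself a Morse--Hedlund consequence), and a left special word of length $2|j|m-1$ immediately exhibits two points of $Y$ agreeing on the window $\{-|j|m+1,\dots,|j|m-1\}$ whose images under $\sigma^{jm}=\phi^{im}|_Y$ differ at coordinate $0$, giving $\rr(\phi^{im})\ge|j|m$. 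The two routes are logically equivalent --- your determinism-plus-complexity step is essentially an inlined proof of the special-words fact --- so the paper's version is shorter, while yours makes explicit the one point where care is genuinely needed, namely that the extension rule depends only on the word and not on its position (because $g$ is a sliding block code). Your verification of the quantitative inequalities ($b-a\ge jm+r-1$ and $r\le jm-1$) is correct, and the handling of $j<0$ by left-extension is fine.
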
 
\begin{proof}
Since $\phi$ and $\sigma$ commute, if $\phi^i(x)=\sigma^j(x)$ then $\phi^i(\sigma^kx)=\sigma^i(\sigma^kx)$ for all $k\in\Z$ and so by continuity we get $\phi^{i} = \sigma^j$ on $\overline{\mathcal{O}(x)}$, the orbit closure under $\sigma$ of $x$. The map $\phi^{i}$ preserves $\overline{\mathcal{O}(x)}$ and by aperiodicity, it is infinite. 
For each $m \ge 1$, there is a left special word in $w_m\in\CL( \overline{\mathcal{O}(x)})$ of length $2|j| m-1$, meaning there exist $a\neq b$ such that $aw_m, bw_m\in\CL( \overline{\mathcal{O}(x)})$.  If $y,z\in\overline{\mathcal{O}(x)}$ are such that 
$y[-|j|m+1] \dots y[|j|m-1] = z[-|j|m+1] \dots z[|j|m-1] =w_{m}$ 
but $y[-|j|m]=a$ and $z[-|j|m]=b$ then $(\sigma^{|j|m}y)[0]=a\neq b=(\sigma^{|j|m}z)[0]$.  This implies that $\rr(\sigma^{jm}) \ge  |j| m$, proving the lemma.
\end{proof}

\begin{thm}\label{thm:nilpotent}
Suppose $(X, \sigma)$ is a shift such that there is an automorphism $\phi$ with $\rr(\phi^n) = \Bo(n^{1/d})$.
If $\phi$ has infinite order, then
\[
\liminf_{n \to \infty} \frac{P_X(n)}{n^{d+1}} > 0.
\]
\end{thm}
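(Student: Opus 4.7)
The plan is to combine the two-dimensional $\phi$-spacetime construction from the proof of Theorem~\ref{range} with the Cyr--Kra periodicity theorem (Theorem~\ref{Cyr-Kra}) and Lemma~\ref{lem:range} in order to contradict the infinite order of $\phi$.

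Fix $C>0$ with $\rr(\phi^m)\le Cm^{1/d}$ for every $m\ge 1$, and for each $x\in X$ define $\eta_x\in\Sigma^{\Z^2}$ by $\eta_x(a,b):=\phi^b(x)[a]$. Since $\phi^j$ is a sliding block code of range at most $Ck^{1/d}$ for each $0\le j<k$, every $n\times k$ pattern appearing in $\eta_x$ is determined by the restriction of some $y\in X$ to a horizontal interval of length at most $n+2Ck^{1/d}$, giving
\[
P_{\eta_x}(n,k)\;\le\;P_X\bigl(n+2Ck^{1/d}\bigr).
\]
Assume for contradiction that $\liminf_{n\to\infty}P_X(n)/n^{d+1}=0$ and pick $n_\ell\to\infty$ with $P_X(n_\ell)=o(n_\ell^{d+1})$. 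For each such $n_\ell$ set $n:=\lfloor n_\ell/2\rfloor$ and $k:=\lfloor (n_\ell/(4C))^d\rfloor$, so that $n+2Ck^{1/d}\le n_\ell$ and $nk\ge c\,n_\ell^{d+1}$ for a positive constant $c$ depending only on $C$ and $d$. Then $P_{\eta_x}(n,k)\le P_X(n_\ell)<nk/2$ for all sufficiently large $\ell$, and Theorem~\ref{Cyr-Kra} applied to $\eta_x$ yields a nonzero period $(i_x,j_x)\in\Z^2$, which amounts to the relation $\phi^{j_x}(x)=\sigma^{-i_x}(x)$.

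To finish, I choose an aperiodic $x\in X$---such a point must exist, for otherwise $X$ is a finite union of periodic orbits, $\Aut(X)$ is finite, and $\phi$ cannot have infinite order. The case $j_x=0$ forces $x$ to be $\sigma$-periodic, contradicting aperiodicity. The case $i_x\ne 0,\ j_x\ne 0$ is ruled out by Lemma~\ref{lem:range}, which gives $\rr(\phi^{j_xm})\ge |i_x|m$ for all $m\ge 1$ and hence contradicts $\rr(\phi^m)=O(m^{1/d})$ once $d\ge 2$. In the remaining case $i_x=0,\ j_x\ne 0$, we have $\phi^{j_x}(x)=x$, so $\phi^{j_x}$ is the identity on the $\sigma$-orbit closure of $x$; running the argument uniformly on the full spacetime $\U:=\overline{\{\eta_x:x\in X\}}$ and invoking Lemma~\ref{lem:Cyr-Kra2b}, one produces a single integer $T\ge 1$ with $\phi^T=\id$ on $X$, again contradicting that $\phi$ has infinite order.

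The main technical obstacle is this last case. The period $(0,j_x)$ supplied by Theorem~\ref{Cyr-Kra} depends on $x$, and upgrading this family of pointwise vertical periods to a single integer $T$ that works uniformly across $X$ is precisely what Lemma~\ref{lem:Cyr-Kra2b} accomplishes---but only once one knows that every configuration in $\U$ is vertically periodic. This in turn requires applying a version of Theorem~\ref{Cyr-Kra} at the subshift level to $\U$, which is possible because the complexity bound $P_\U(n,k)\le P_X(n+2Ck^{1/d})$ holds uniformly for the whole spacetime, not merely for individual $\eta_x$.
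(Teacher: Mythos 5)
Your proof follows essentially the same route as the paper's: bound the rectangle complexity of the $\phi$-spacetime by $P_X$ of a linearly scaled horizontal window using the range hypothesis, invoke Theorem~\ref{Cyr-Kra} to extract a periodicity vector, eliminate the cases with nonzero horizontal component via Lemma~\ref{lem:range} and aperiodicity, and finish by upgrading pointwise vertical periodicity to a uniform period with Lemma~\ref{lem:Cyr-Kra2b}. The one point you gloss over is that Lemma~\ref{lem:Cyr-Kra2b} needs \emph{every} configuration of $\U$ to be vertically periodic, and for $\eta_x$ with $x$ a $\sigma$-periodic point Theorem~\ref{Cyr-Kra} may only return a horizontal period; the paper closes this by observing that $\phi$ permutes the finite set of $\sigma$-periodic points of each given period and therefore acts with finite order on them.
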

Recall that~\cite{hochman} provides an example of a subshift with polynomial complexity and an infinite order automorphism polynomially range distorted. Furthermore, the exponent may be arbitrairly small.

\begin{proof}
Let $C_0$ be a constant  such that for any $n \in \N $  and  all integers $k \le n^d$, $\rr(\phi^k) \le C_0n$.

Consider the $\phi$-spacetime $\U$ and let $V$ be a rectangle of
height $n^d$ and width $2n+1$ in $\Z^2$, with the horizontal base of $V$ centered at
$(0,0)$.  
Recall that an horizontal segment of length $2 \rr(\phi^k) +1 $ centered at the origin  codes the point $\{(0,k)\}$.

Let $r(n) = \sup_{ 0\le k \le n^d }2\rr(\phi^k) +2n+1$.  So, the horizontal segment of length $r(n)$  centered at
$(0,0)$   codes  $V$.

 Since  $\rr(\phi^k) \le C_0n$,   we have
that $r(n) \le 2C_0n +2n+1 \le Cn$, where $C = \lfloor 2C_0 +2 \rfloor +2$.
We conclude that there are at most $P_X( Cn)$ possible colorings of 
the rectangle $V$.

Again letting $P_\U(k,n)$ denote the complexity of the $k \times n$ rectangle in $\U$,
this remark  implies that $P_\U(n, n^d) \le P_X(Cn)$. We proceed by 
contradiction and assume that $\liminf_{n} P_X(n) /n^{d+1} =0$.
Since  for each $n$, $P_{X}(C\lfloor n/C \rfloor) \le P_{X}(n)$,  we also have
$\liminf_n P_\U(n, n^d)  /n^{d+1} =0$.  
It follows that $P_\U(n, n^d) < n^{d+1} /2$ for infinitely many $n\in\N$.
By Theorem~\ref{Cyr-Kra}, we conclude that if
$x_0\in X$ is a fixed aperiodic element of $X$, then
$\phi^i(x_0) = \sigma^j(x_0)$ for some $i>0$ and $j \in\Z.$

By Lemma~\ref{lem:range}, 
$\rr(\phi^{im}) \ge |j|\cdot m$ for all $m\in\N$.  On the other hand, since $\phi$ is distorted
we also have that $\lim \rr(\phi^k)/k = 0$.  These two properties can only
be simultaneously true if $j = 0$.  Therefore, for any aperiodic $x_0\in X$,
there exists $i_{x_0}\in\N$ such that $\phi^{i_{x_0}}(x_0)=x_0$. Hence, the map $\phi$ is periodic on each aperiodic sequence of $X$. 

Since the set of periodic sequences of a given period is finite and the automorphism $\phi$ has to preserves this set, the map $\phi$ is also periodic on each periodic sequence. 
By Lemma \ref{lem:Cyr-Kra2b} applied to the $\phi$-spacetime $\U$, the automorphism $\phi$ has a finite order.
But  this 
contradicts the hypothesis that $\phi$ has infinite order.
\end{proof}

Let us recall some basics on nilpotent groups.
If $G$ is a group and $A, B\subset G$, let $[A,B]$ denote the {\em commutator subgroup}, 
meaning the subgroup generated by $\{a^{-1}b^{-1}ab\colon a\in A, b\in B\}$.  
Given a group $G$, we inductively define the lower central series by setting 
$G_1 = G$ and $G_{k+1} = [G, G_k]$ for $k>0$.  
If $d$ is the least integer such that $G_{d+1}$ is the trivial group $\{e\}$, 
then we say that $G$ is {\em $d$-step nilpotent}, and we say that $G$ is {\em nilpotent} if it is $d$-step nilpotent for some $d\geq 1$. 

We use a few standard facts about nilpotent groups: 
\begin{enumerate}
\item  Any subgroup of a finitely generated nilpotent group $G$ is finitely generated. 
\item The set of elements of finite order in a nilpotent group form a normal subgroup $T$, called the {\em torsion subgroup}. \item A finitely generated torsion subgroup of a nilpotent group is finite.
\end{enumerate}

We also use the following standard fact about commutators in any group
(see 2.3b of~\cite{WG} for a more general statement  and further references):

\begin{prop}\label{prop: commutators}
For any group $G$, if $m_i, m_j \ge 1$  and $g_i \in G_i$, and  $g_j \in G_j$ then 
\[
[g_i^{m_i}, g_j^{m_j}] = [g_i, g_j]^{m_i m_j} \mod( G_{i+j+1})
\]
\end{prop}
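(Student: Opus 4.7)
The plan is to prove the proposition in two symmetric halves using the standard commutator identities, combined with the basic fact (which I would first cite) that $[G_i,G_j]\subseteq G_{i+j}$; in particular $[g_i,g_j]\in G_{i+j}$. The engine is the identity
\[
[xy,z]=[x,z]^{y}\,[y,z]\qquad\text{where}\qquad [x,z]^{y}=y^{-1}[x,z]y,
\]
together with the observation that $y^{-1}wy=w\cdot[w,y]$, so that conjugation by an element of $G_k$ acts trivially on $G_\ell$ modulo $G_{k+\ell}$.

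First, I would prove the one-variable statement
\[
[g_i^{m_i},g_j]\equiv[g_i,g_j]^{m_i}\pmod{G_{i+j+1}}
\]
by induction on $m_i\ge1$. The base case is trivial. For the inductive step, write
\[
[g_i^{m_i},g_j]=[g_i\cdot g_i^{m_i-1},g_j]=[g_i,g_j]^{g_i^{m_i-1}}\,[g_i^{m_i-1},g_j].
\]
Since $[g_i,g_j]\in G_{i+j}$ and $g_i^{m_i-1}\in G_i$, the correction term from conjugation lies in $[G_{i+j},G_i]\subseteq G_{2i+j}\subseteq G_{i+j+1}$ (using $i\ge1$). Hence $[g_i,g_j]^{g_i^{m_i-1}}\equiv[g_i,g_j]\pmod{G_{i+j+1}}$, and combining with the inductive hypothesis yields the claim.

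By the symmetric identity $[z,xy]=[z,y]\,[z,x]^{y}$, an analogous induction on $m_j$ gives
\[
[g_i,g_j^{m_j}]\equiv[g_i,g_j]^{m_j}\pmod{G_{i+j+1}},
\]
and the same argument, applied once more, lets me promote $g_i$ to $g_i^{m_i}$ in this formula: we have $[g_i^{m_i},g_j^{m_j}]\equiv[g_i,g_j^{m_j}]^{m_i}$, and then $[g_i,g_j^{m_j}]^{m_i}\equiv([g_i,g_j]^{m_j})^{m_i}=[g_i,g_j]^{m_im_j}$, all modulo $G_{i+j+1}$. Stringing the two congruences together gives the proposition.

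The only point that requires a little care is the inclusion $[G_i,G_j]\subseteq G_{i+j}$, which I would simply quote from a standard reference (it is proved by induction on $j$ using the Hall--Witt identity together with the fact that $G_{k+1}=[G,G_k]$); given this, the remainder is a routine commutator computation. There is no serious obstacle: the only subtlety is keeping track of which terms live in $G_{i+j+1}$, and this is controlled because every correction produced by moving factors past one another involves an extra commutator with something in $G_i$ or $G_j$, which raises the depth in the lower central series by at least one.
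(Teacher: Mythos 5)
Your argument is correct and complete. Note that the paper does not actually prove Proposition~\ref{prop: commutators}; it only points the reader to 2.3b of~\cite{WG}, where a more general statement about weighted commutators is derived via the commutator calculus. Your proof is therefore a genuinely different, self-contained route: a double induction on the exponents using the expansion identity $[xy,z]=[x,z]^{y}[y,z]$, with every correction term absorbed into $G_{i+j+1}$ via the filtration property $[G_a,G_b]\subseteq G_{a+b}$. The two points that need care are exactly the ones you flag: the inclusion $[G_{i+j},G_i]\subseteq G_{2i+j}\subseteq G_{i+j+1}$ (which uses $i\ge 1$ and the fact that the lower central series is descending), and the legitimacy of multiplying congruences and raising them to powers, which is clean because $G_{i+j+1}$ is normal so one can simply compute in the quotient $G/G_{i+j+1}$; there the classes of $[g_i,g_j]$ and $[g_i^{m-1},g_j]$ are powers of a single element and commute, so the induction closes. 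One cosmetic remark: your identity $[xy,z]=[x,z]^{y}[y,z]$ with $a^b=b^{-1}ab$ presumes the convention $[a,b]=a^{-1}b^{-1}ab$, whereas the paper writes $[u,t]=utu^{-1}t^{-1}$ in its presentation of the Heisenberg group; the proposition and your argument are insensitive to this choice (under the other convention one uses the analogous identity $[xy,z]=x[y,z]x^{-1}\,[x,z]$), but you should fix one convention explicitly. What your approach buys is a short, elementary, verifiable proof in place of an external citation; what the citation buys the authors is access to the sharper general statement in~\cite{WG} without reproducing the commutator calculus.
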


\begin{lemma}\label{nilpotent-facts}
Suppose $G$ is a finitely generated nilpotent group with torsion
subgroup $T$ and assume that the quotient $G/T$ is $d$-step nilpotent with 
$d\geq 2$.  Then
there exists an element $z \in G_{d}$ of infinite order that is polynomially distorted. 
More precisely, there exists a finite set $S \subset G$ such that
$$ \ell_{S}(z^n) = \Bo(n^{\frac 1d}).$$ 
\end{lemma}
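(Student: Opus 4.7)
The plan is to exhibit $z$ as a left-normed iterated commutator of length $d$, and to exploit Proposition~\ref{prop: commutators} to express $z^{n^d}$ as a bounded-depth commutator of $n$-th powers, modulo a finite ``error'' subgroup.

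First I would locate $z$ and set up the generating set. Because the lower central series is compatible with quotients, $G_d T / T = (G/T)_d$, which is nontrivial since $G/T$ is precisely $d$-step nilpotent. The group $(G/T)_d$ is generated by left-normed length-$d$ iterated commutators of elements of $G/T$, so at least one such iterated commutator must be nontrivial there. Lifting the relevant entries to $h_1, \ldots, h_d \in G$, set $z := [h_1, h_2, \ldots, h_d] \in G_d$; since its image in $G/T$ is nontrivial, $z \notin T$ and hence $z$ has infinite order in $G$. The relation $(G/T)_{d+1} = \{e\}$ further gives $G_{d+1} \subseteq T$, and since $T$ is the torsion subgroup of a finitely generated nilpotent group, the standard facts (1)--(3) recalled above force $T$, and thus $G_{d+1}$, to be finite. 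I would then take $S := \{h_1^{\pm 1}, \ldots, h_d^{\pm 1}\} \cup G_{d+1}$, a finite subset of $G$.

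The key algebraic step, and what I expect to be the main obstacle, is the identity
\[
[h_1^n, h_2^n, \ldots, h_d^n] \equiv z^{n^d} \pmod{G_{d+1}},
\]
which I would establish by induction on $d$. The base case $d=2$ is Proposition~\ref{prop: commutators} applied with $i=j=1$. For the inductive step, write $[h_1^n, \ldots, h_{d-1}^n] = v \cdot u$ with $v = [h_1, \ldots, h_{d-1}]^{n^{d-1}} \in G_{d-1}$ and $u \in G_d$; then the identity $[ab,c] = [a,c]^b [b,c]$ yields $[h_1^n, \ldots, h_d^n] = [v, h_d^n]^u \cdot [u, h_d^n]$. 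Both $[u, h_d^n] \in [G_d, G_1] = G_{d+1}$ and the conjugation-by-$u$ error on $[v, h_d^n]$ (controlled by $[G_d, G_d] \subseteq G_{2d} \subseteq G_{d+1}$) lie in $G_{d+1}$, so modulo $G_{d+1}$ we are reduced to $[v, h_d^n]$; a final application of Proposition~\ref{prop: commutators} with $i = d-1$, $j = 1$ converts this to $z^{n^d}$ modulo $G_{d+1}$. The obstacle is the bookkeeping: every commutator error accumulating through these manipulations must be checked to lie in $G_{d+1}$.

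Granting this identity, $z^{n^d}$ equals $[h_1^n, \ldots, h_d^n]$ times one element of $G_{d+1} \subseteq S$. A routine depth-$d$ recursion bounds the $S$-length of the iterated commutator by $C_d \cdot n$, giving $\ell_S(z^{n^d}) = O(n)$. To pass from the sparse subsequence $\{n^d\}$ to arbitrary $m$, set $n = \lceil m^{1/d} \rceil$ and $r = n^d - m = O(m^{(d-1)/d})$, so that $\ell_S(z^m) \le \ell_S(z^{n^d}) + \ell_S(z^r)$. A strong induction on $m$ then absorbs $\ell_S(z^r)$ into the main term (since $(d-1)/d^2 < 1/d$ for $d \ge 2$), yielding $\ell_S(z^m) = O(m^{1/d})$ as required.
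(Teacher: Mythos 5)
Your proof is correct and follows essentially the same route as the paper: both hinge on Proposition~\ref{prop: commutators} to express $z^{n^d}$, up to a controlled error, as a $d$-fold iterated commutator of $n$-th powers of fixed elements, hence of word length $O(n)$. The only differences are in execution: the paper first passes to the torsion-free quotient $G/T$, where $(G/T)_{d+1}$ is trivial so the commutator identity is exact, and handles general exponents via a base-$q$ expansion with $q\approx n^{1/d}$, whereas you carry the finite error group $G_{d+1}\subseteq T$ inside the generating set and interpolate by a strong-induction absorption argument; both devices work.
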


\begin{proof}
We first claim that it suffices to prove the result when $T$ is trivial. Namely, since 
 $T$ is normal and finite, for any $z \in G$,
$$
 \ell_{S}(z^n) \le \ell_{S_0}((zT)^n) + K, 
$$
where $S_0$ is a set of generators for $G/T$,
$K$  is the order of $T$, and $S$ is a set of generators of $G$ containing $T$ and a
representative of each coset in $S_0$.  
Hence it suffices to show that in the torsion free
group $G/T$,  there is an element $zT \in (G/T)_d$ such that
\[
\ell_{S_0}((zT)^n) = \Bo(n^{\frac 1d}).
\] 
Moreover the element $z\in G$ has infinite order as soon as  $zT$ is not $T$.

Thus we now assume that $H = G/T$  is torsion free and $d$-step nilpotent. In particular, 
the group  $H_{d}$ is nontrivial. Since it is generated by the elements $[a_{1}, [a_{2},\dots , [a_{d-1}, a_{d}] \dots ]]$, there exist $a_{1}, \ldots, a_{d}$ in $H$ such that 
 $$z = [a_{1}, [a_{2},\dots , [a_{d-1}, a_{d}] \dots ]]$$ is not trivial.

By Proposition~\ref{prop: commutators},  for any $m_1, \ldots, m_{d} \in\N$,
$$
 [a_1^{m_1},[a_2^{m_2},[ a_3^{m_3},[\dots a_n^{m_d} ]\dots]]] 
= z ^{\Pi_{i=1}^d m_i}.
$$
In particular, for any integers $1 \le q$, $0 \le \alpha \le q$ and $0 \le i <d$,
\[
z^{\alpha q^{i}}=  [a_1^{q},[a_2^{q},[ \dots [a_{i}^q, [a_{i+1}^\alpha, [a_{i+2},[\dots  ,a_d ]\dots]]]\dots]].
\]
Letting $S_{0}$ denote the finite set $\{a_{1}, \ldots, a_{d}\} $, the word length of the right-hand side  of this equation is
\begin{equation}\label{eq:three} 
\ell_{S_{0}}(z^{\alpha q^{i}}) \le 1+\sum_{j=1}^{i} 2^j q+ 2^{i+1}\alpha+  \sum_{j=i+2}^d 2^j   \le 2^{d+1}q. 
\end{equation}

For an integer $n \ge 1$, let $q$ be the smallest integer such that $q> n^{\frac1d}$. Write $n$ in base $q$  as
$$ n = \sum_{i=0}^{d-1} \alpha_{i} q^{i},$$
where $ 0\le \alpha_{i} <q$. Since $\ell_{S_{0}}(ab) \le \ell_{S_{0}}(a) + \ell_{S_{0}}(b)$ for every $a,b \in H$, the inequality in~\eqref{eq:three} leads to 
\begin{equation*}
\ell_{S_{0}}(z^n) \le \sum_{i=0}^{d-1} \ell_{S_{0}}(z^{\alpha_{i} q^{i}}) \le d 2^{d+1}q \le d 2^{d+1}(n^{\frac 1d} +1).
\qedhere
\end{equation*}
\end{proof}

We deduce the following corollary
\begin{cor}\label{cor:nilpotent}
Suppose $(X, \sigma)$ is a shift 
and that $G$ is a finitely generated 
nilpotent subgroup of $\Aut(X)$ with torsion subgroup $T$.
If $G/T$ is $d$-step nilpotent with $d \ge 2,$ then 
\[
\liminf_{n \to \infty} \frac{P_X(n)}{n^{d+1}} > 0.
\]
\end{cor}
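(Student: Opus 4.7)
The plan is to chain together the two key results already established in this section, namely Lemma~\ref{nilpotent-facts} and Theorem~\ref{thm:nilpotent}, with the standard range-versus-word-length comparison appearing in the proof of Proposition~\ref{prop: distortion}. The idea is that the nilpotent subgroup is forced to contain a polynomially distorted element of infinite order, which in turn is forced to be polynomially range distorted as an automorphism of $(X,\sigma)$, and then the complexity lower bound follows directly from Theorem~\ref{thm:nilpotent}.

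First I would apply Lemma~\ref{nilpotent-facts} to the finitely generated nilpotent group $G$. Since $G/T$ is $d$-step nilpotent with $d \ge 2$, this produces an element $z \in G_d \subset G$ of infinite order and a finite generating set $S \subset G$ such that $\ell_S(z^n) = \Bo(n^{1/d})$. Since $G$ is a subgroup of $\Aut(X)$, each generator in $S$ is an automorphism of $(X,\sigma)$, and in particular each has a well-defined (finite) range as a sliding block code.

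Next I would translate group distortion to range distortion, exactly as in Proposition~\ref{prop: distortion}. Setting $\cR_S = \max_{g \in S} \rr(g)$ and using subadditivity $\rr(g_1 g_2) \le \rr(g_1) + \rr(g_2)$, one obtains
\[
\rr(z^n) \le \ell_S(z^n)\, \cR_S = \Bo(n^{1/d}).
\]
Thus $z$ is an automorphism of $(X,\sigma)$ of infinite order whose iterates have ranges growing at most like $n^{1/d}$.

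Finally I would apply Theorem~\ref{thm:nilpotent} directly with $\phi := z$ and the exponent $d$, obtaining
\[
\liminf_{n \to \infty}\, \frac{P_X(n)}{n^{d+1}} \;>\; 0,
\]
which is the desired conclusion. There is no real obstacle: the only point worth checking carefully is that the finite set $S$ delivered by Lemma~\ref{nilpotent-facts} can indeed be regarded as a subset of $\Aut(X)$ (so that $\cR_S$ makes sense), which is automatic because $G$ is given as a subgroup of $\Aut(X)$. The torsion subgroup $T$ plays no further role beyond what Lemma~\ref{nilpotent-facts} already absorbed by passing to $G/T$ and lifting.
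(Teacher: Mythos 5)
Your proposal is correct and follows exactly the paper's own argument: extract the element $z$ from Lemma~\ref{nilpotent-facts}, bound $\rr(z^n) \le \ell_S(z^n)\,\cR_S = \Bo(n^{1/d})$ by subadditivity of the range, and invoke Theorem~\ref{thm:nilpotent}. No differences worth noting.
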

 \begin{proof}
 Let $z\in G$ be the element guaranteed to exist by Lemma~\ref{nilpotent-facts}. 
If  $S \subset G$ is a finite set, for all $n\in \N$ we have 
$$\rr(z^n) \le \ell_{S}(z^n)  \max_{g \in S} \rr(g). 
$$
The result follows from Theorem \ref{thm:nilpotent}.
 \end{proof}

\subsection{The automorphism group for \shifts whose complexity is subpolynomial} 

For minimal shifts of polynomial growth, there are strong constraints on the automorphism group: 

\begin{thm}[Cyr \& Kra~\cite{CK2}]
\label{thm:heisenberg} 
Suppose $(X, \sigma)$ is a minimal shift and there exists $\ell\in\N$ such that $ P_{X}(n) =\lo(n^{\ell+1})$.  Then any finitely generated, torsion-free subgroup of $\Aut(X)$ is a group of polynomial growth of degree at most $\ell$. 
\end{thm}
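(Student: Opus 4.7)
The plan is to establish the theorem in three stages: prove that $G$ has polynomial growth, apply Gromov's polynomial growth theorem to pass to a nilpotent structure, and then use the distortion--range framework developed earlier in this paper to bound the degree of that polynomial growth by $\ell$.

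For the first stage, let $G$ be a finitely generated torsion-free subgroup of $\Aut(X)$ with symmetric generating set $S$ and associated range $\cR_S$. Every element of the word-ball of radius $n$ in $G$ is a sliding block code of range at most $n\cR_S$, so it suffices to bound the number of automorphisms of range $\le R$ polynomially in $R$. The naive bound coming from the Curtis--Hedlund--Lyndon theorem is $|\Sigma|^{P_X(2R+1)}$, which is exponential in a polynomial and far too weak. The minimality hypothesis is used here crucially: for a minimal shift an automorphism is determined by its value on any single point, and combining uniform recurrence with $P_X(n) = \lo(n^{\ell+1})$ one can show that only polynomially many automorphisms of range $\le R$ can arise. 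This collapse from exponential to polynomial is the technical heart of the proof.

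Once polynomial growth of $G$ is established, Gromov's theorem gives that $G$ is virtually nilpotent; since $G$ is torsion-free, one passes to a finite-index torsion-free finitely generated nilpotent subgroup $H \le G$, of nilpotency class $d$ say. By the Bass--Guivarc'h formula, $G$ and $H$ share the same polynomial growth degree, namely $D = \sum_{k=1}^{d} k\cdot \rank(H_k/H_{k+1})$. To bound $D$ by $\ell$, one uses the observation that the computation in Lemma~\ref{nilpotent-facts} generalizes to deeper layers: any nontrivial $z\in H_k$ expressible as a $k$-fold iterated commutator of generators satisfies $\ell_S(z^n) = \Bo(n^{1/k})$, so by Proposition~\ref{prop: distortion} it is range distorted with $\rr(z^n)=\Bo(n^{1/k})$. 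Theorem~\ref{thm:nilpotent} then forces $\liminf_n P_X(n)/n^{k+1} > 0$, which together with $P_X(n)=\lo(n^{\ell+1})$ gives $k \le \ell$, so the nilpotency class of $H$ is at most $\ell$. Upgrading this bound on the class to the sharper bound $D \le \ell$ on the full growth degree requires finer accounting of the ranks $r_k = \rank(H_k/H_{k+1})$; for this I would consider the multi-dimensional spacetime generated by several commuting distorted elements within a single layer $H_k$ and apply a higher-dimensional analogue of the Cyr--Kra two-dimensional complexity bound (Theorem~\ref{Cyr-Kra}) to constrain the ranks layer-by-layer.

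The main obstacle is the first step: collapsing the trivial exponential count of block codes to a polynomial count is delicate and is where minimality enters essentially, since without minimality one can easily build zero-entropy shifts with exponentially growing automorphism groups. A secondary, though still nontrivial, challenge is turning the bound on the nilpotency class of $H$ into the advertised bound on the Bass--Guivarc'h growth degree $D$, which requires the extra geometric input about multi-dimensional spacetimes mentioned above.
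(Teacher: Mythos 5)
This theorem is not proved in the paper at all: it is quoted verbatim from Cyr--Kra~\cite{CK2}, so there is no internal proof to compare against. Judged on its own terms, your proposal has a genuine gap at its core. The entire content of the theorem sits in your first stage --- the claim that, for a minimal shift with $P_X(n)=\lo(n^{\ell+1})$, the number of automorphisms of range at most $R$ grows polynomially in $R$ with the right exponent. You assert this (``combining uniform recurrence with the complexity bound one can show\dots'') but give no argument, and the observation that an automorphism of a minimal shift is determined by its value at one point only yields countability, not a count. This is precisely the hard combinatorial lemma of~\cite{CK2}. Note also that once that lemma is in hand you are essentially finished: the ball of radius $n$ in $G$ injects into the set of automorphisms of range at most $n\cR_S$, so $|B_G(n)|=\lo(n^{\ell+1})$, and since the growth degree of a finitely generated group of polynomial growth is an integer (Gromov plus Bass--Guivarc'h), this already forces degree at most $\ell$.

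Your third stage is therefore not only superfluous but, as described, does not work. Bounding the nilpotency class $d$ of the finite-index subgroup $H$ via Lemma~\ref{nilpotent-facts} and Theorem~\ref{thm:nilpotent} does not bound the Bass--Guivarc'h degree $D=\sum_k k\,\rank(H_k/H_{k+1})$, which can vastly exceed $d$ when the ranks $r_k$ are large; you acknowledge this and appeal to an unproved ``higher-dimensional analogue'' of Theorem~\ref{Cyr-Kra} applied to multi-dimensional spacetimes, which is a research program rather than a proof. (A further small slip: combining $\liminf P_X(n)/n^{d+1}>0$ with $P_X(n)=\lo(n^{\ell+1})$ actually gives $d\le\ell-1$, not $d\le\ell$.) In short, the proposal correctly identifies where minimality must enter but supplies neither the key counting lemma nor a valid mechanism for converting distortion information into a bound on the growth degree.
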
 

For instance, if the Heisenberg group is embedded  into the automorphism group of a minimal shift $(X, \sigma)$, 
we must have at least $\limsup_{n} P_{X}(n)/n^4 >0$. 
Using distortion, we obtain a better bound, and we start with  an algebraic lemma on the growth rate of the nilpotent group: 
\begin{lemma}\label{lem:alg2}
If $G$ is a finitely generated, torsion free $d$-step nilpotent group for some $d \ge 2$, then $G$ has polynomial growth rate  of degree at least $d(d+1)/2 +1$.
\end{lemma}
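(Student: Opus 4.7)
The plan is to use the Bass--Guivarc'h formula, which expresses the degree $D$ of polynomial growth of any finitely generated nilpotent group $G$ as
\[
D \;=\; \sum_{k=1}^{d} k\cdot r_k, \qquad r_k := \rank_\Z\bigl(G_k/G_{k+1}\bigr).
\]
It therefore suffices to prove (i) $r_k\ge 1$ for every $1\le k\le d$, and (ii) $r_1\ge 2$. Summing these contributions gives
\[
D \;\ge\; 2 + (2+3+\cdots+d) \;=\; 1 + \frac{d(d+1)}{2}.
\]

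For (i), the hypothesis that $G$ is $d$-step nilpotent forces $G_k\supsetneq G_{k+1}$ for every $k\le d$: if instead $G_k=G_{k+1}$, then $G_k = [G,G_k] = [G,G_{k+1}] = G_{k+2}$ and iterating yields $G_k = G_{d+1}=\{e\}$, contradicting $G_d\ne\{e\}$. Each $G_k/G_{k+1}$ is thus a nontrivial finitely generated abelian group, and by the classical theorem of Malcev for torsion-free finitely generated nilpotent groups it is moreover torsion-free, hence isomorphic to $\Z^{r_k}$ with $r_k\ge 1$.

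For (ii), suppose for contradiction $r_1\le 1$. If $r_1=0$ then $G/G_2$ is trivial, so $G=G_2=\cdots=\{e\}$, contradicting $d\ge 2$. If $r_1=1$, pick $g\in G$ whose image generates $G/G_2\cong\Z$ and write arbitrary $a,b\in G$ as $a=g^i x$, $b=g^j y$ with $x,y\in G_2$. Since $G_2/G_3$ is central in $G/G_3$, Proposition~\ref{prop: commutators} gives the bilinear expansion
\[
[a,b] \;\equiv\; [g^i,g^j]\,[g^i,y]\,[x,g^j]\,[x,y] \pmod{G_3}.
\]
Now $[g^i,g^j]=e$, and each of the remaining three commutators lies in $[G,G_2]=G_3$. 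Hence $G_2=[G,G]\subseteq G_3$, so $G_2=G_3$, and iterating the relation $G_{k+1}=[G,G_k]$ yields $G_2 = G_{d+1}=\{e\}$, forcing $G$ abelian---a contradiction.

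The main obstacle I anticipate is the invocation of Malcev's theorem in (i): the torsion-freeness of the lower central quotients of a torsion-free finitely generated nilpotent group is standard but not trivial. If a fully self-contained argument is preferred, one can bypass this appeal by combining Lemma~\ref{nilpotent-facts} with iterated applications of Proposition~\ref{prop: commutators} to exhibit, for each $1\le k\le d$, an explicit element of infinite order in $G_k/G_{k+1}$ as an iterated commutator; this replaces the Malcev input with tools already developed earlier in the paper.
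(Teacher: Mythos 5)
Your strategy coincides with the paper's: apply the Bass--Guivarc'h formula and establish the two rank bounds $r_k\ge 1$ for $1\le k\le d$ and $r_1\ge 2$; your computation for (ii) is essentially the paper's argument (which passes to $\bar G=G/G_3$) rewritten via Proposition~\ref{prop: commutators}. The genuine gap is your appeal to Mal$'$cev in step (i). Mal$'$cev's theorem asserts that the \emph{upper} central quotients $Z_{i+1}(G)/Z_i(G)$ of a torsion-free nilpotent group are torsion-free; it is \emph{false} that the \emph{lower} central quotients $G_k/G_{k+1}$ of a finitely generated torsion-free nilpotent group are torsion-free. Concretely, let $N$ be the subgroup of the Heisenberg group $\mathcal{H}$ of \eqref{eq:defHeisenberg} generated by $u^2$, $t$ and $s$: then $N$ is torsion-free and $2$-step nilpotent, $[u^2,t]=s^2$ gives $N_2=\langle s^2\rangle$, and $N/N_2\cong\Z^2\oplus\Z/2\Z$ has torsion. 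So ``nontrivial'' does not upgrade to ``rank at least $1$'' the way you claim; and the same issue infects step (ii), where $r_1=1$ only yields $G/G_2\cong\Z\oplus F$ with $F$ a finite abelian group, so you cannot choose a single $g$ whose image generates $G/G_2$. (The paper routes through the upper central series precisely so that Mal$'$cev applies as stated, although its assertion that $G_{d-i}/G_{d-i+1}$ \emph{embeds} in $Z_{i+1}(G)/Z_i(G)$ is also delicate for the same reason.)

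Both steps are repairable without Mal$'$cev, because only positivity of rank is needed. By Proposition~\ref{prop: commutators}, if $h\in G_k$ satisfies $h^m\in G_{k+1}$, then $[g,h]^m\equiv[g,h^m]\equiv e\pmod{G_{k+2}}$ for every $g\in G$; since such commutators generate the abelian group $G_{k+1}/G_{k+2}$, it follows that if $G_k/G_{k+1}$ is a torsion group then so is $G_{k+1}/G_{k+2}$. Iterating, $G_k$ would be a finitely generated nilpotent torsion group, hence finite by facts (1) and (3) of Section~\ref{sec:nilpotent}, hence trivial inside the torsion-free $G$ --- contradicting $G_d\ne\{e\}$ when $k\le d$. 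This yields $r_k\ge 1$ for all $k\le d$. For (ii), if $r_1=1$ write $a=g^ix$ and $b=g^jy$ with $x,y$ torsion modulo $G_2$; then $[g^i,g^j]=e$ while $[g^i,y]$, $[x,g^j]$, $[x,y]$ are each torsion modulo $G_3$ by the same bilinearity, so $G_2/G_3$ is a torsion group and the preceding argument forces $G_2=\{e\}$, contradicting $d\ge 2$. With these patches your proof is correct and, modulo the substitution of this elementary torsion argument for Mal$'$cev's theorem, follows the same route as the paper.
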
 

\begin{proof}
Letting $Z(H)$ denote the center of the group $H$, we inductively define a
sequence of normal subgroups.  Set $Z_{0}(G) =\{1\}$.  Given $Z_{i}(G)$, 
let $\pi_{i} \colon G \to G/Z_{i}(G)$ denote the quotient map and define
$$Z_{i+1}(G) := \pi_{i}^{-1}(Z(G/Z_{i}(G))).$$
By induction on $i$, it is easy to check that $G_{d+1-i}$ is a subgroup of $Z_{i}(G)$. 
By a result of Mal$'$cev~\cite{M1, M2}, each quotient $Z_{i+1}(G)/Z_{i}(G)$ is torsion free. 
Hence,  each group  $G_{d-i}/G_{d-i+1}$ is torsion free, 
as it  embeds into $Z_{i+1}(G)/Z_{i}(G)$.  In particular, the rank of  each $G_{d-i}/G_{d-i+1}$ is at least $1$.

We next check that the rank of $G/G_2$ is at least $2$. 
Let $\bar{G}$ denote the group $G/G_{3}$. It is a  nilpotent group of step at most $2$ 
and the group  $\bar{G}/\bar{G}_{2}$ is abelian.  We claim that $\bar{G}/\bar{G}_{2}$ is not cyclic. If not, 
then $\bar{G}/\bar{G}_{2}$ is generated by the coset $x\bar{G}_{2}$ and so 
$\bar{G}$ is generated by $\bar{G}_{2}$ and $x$. Since the generators commute (recall that $\bar{G}_{2}$ lies in the center of $\bar{G}$), it follows that $\bar{G}$ is abelian.  However, this contradicts the assumption that $G$ 
is $d$-step for some $d\geq 2$.  Therefore,   $G/G_{2}$ has at least two independent generators, and so its rank is 
at least $2$.

By the Bass-Guivarc'h formula~\cite{Bass, G}, $G$ has polynomial growth rate of degree
\begin{equation}
\label{eq:BG}
 \sum_{k\ge 1} k \rank(G_{k}/G_{k+1}),
 \end{equation}
where $\rank(G_{k}/G_{k+1})$ is the torsion free rank of the abelian group $G_{k}/G_{k+1}$. Since
the rank of each  $G_{k}/G_{k+1}$, $1\le k \le d$ is positive and 
$\rank(G_{1}/G_{2}) \ge 2$, the lemma follows.   
\end{proof}

Recall that a group $G$ is {\em virtually nilpotent} (of degree $d$)  if it contains a finite index ($d$-step) nilpotent subgroup.  
 \begin{thm}\label{th:embeddingHeisenberg}
Let  $(X,\sigma)$ be an infinite minimal shift such that for some $d\geq1$ we have $P_{X}(n) =\lo(n^{(d+1)(d+2)/2 +2})$.  Then  any finitely generated, torsion-free subgroup of  $\Aut(X)$ is virtually nilpotent of step at most $d$.
\end{thm}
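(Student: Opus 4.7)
The plan is to combine Theorem~\ref{thm:heisenberg} of Cyr--Kra with Gromov's polynomial growth theorem to reduce the question to one about the algebraic structure of nilpotent subgroups, and then to exploit the centrality of the shift $\sigma$ in $\Aut(X)$ to eliminate the extremal step. Write $\ell := (d+1)(d+2)/2 + 1$, so the hypothesis reads $P_X(n) = \lo(n^{\ell+1})$. Applying Theorem~\ref{thm:heisenberg} to a finitely generated, torsion-free subgroup $G \le \Aut(X)$, I obtain polynomial growth of degree at most $\ell$, and by Gromov's theorem a finite-index torsion-free nilpotent subgroup $H \le G$ of some step $k$. Supposing for contradiction that $k \ge d+1$, I note $k \ge 2$ and invoke Lemma~\ref{lem:alg2}, which forces the growth degree of $G$ to be at least $k(k+1)/2 + 1$. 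Combined with the upper bound $\ell$, this yields $k = d+1$ and growth degree exactly $\ell$, which by the Bass--Guivarc'h formula~\eqref{eq:BG} pins down the minimal rank structure $\rank(H_1/H_2) = 2$ and $\rank(H_i/H_{i+1}) = 1$ for every $2 \le i \le d+1$.

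The second step brings in $\sigma$. Setting $G' := \langle G, \sigma\rangle$, I split on whether $G \cap \langle\sigma\rangle$ is trivial. If it is, then $G' \cong G \times \Z$ (using that $\sigma$ is central in $\Aut(X)$) is finitely generated, torsion-free, and virtually $(d+1)$-step nilpotent, but a direct Bass--Guivarc'h computation shows that the extra $\Z$-factor pushes its growth degree up to $\ell+1$, contradicting Theorem~\ref{thm:heisenberg} applied to $G'$. Otherwise $\sigma^N \in H$ for some $N \ge 1$ (any element of $G$ has a power lying in the finite-index $H$), and $\sigma^N$ is central in $H$. I will establish the algebraic claim that under the minimal rank structure above, $Z(H) = H_{d+1}$. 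Granting the claim, $\sigma^N \in H_{d+1}$; Lemma~\ref{nilpotent-facts} then makes $\sigma^N$ group-distorted in $H$, and Proposition~\ref{prop: distortion} transfers this to range distortion in $\Aut(X)$. But $\rr(\sigma^n) = n$ for all $n$, so no nontrivial power of $\sigma$ is range distorted---the desired contradiction.

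The technical heart, and the main obstacle, is the algebraic claim $Z(H) = H_{d+1}$; I plan to prove it by induction on the step $k = d+1$. The base $k = 2$ is a direct computation: for a central $z = a^p b^q h_2$ with $h_2 \in H_2$, expanding $[z,a]$ and $[z,b]$ modulo $H_3 = \{e\}$ yields $[b,a]^q = 1 = [a,b]^p$, so $p = q = 0$ by torsion-freeness. For the inductive step, $H/H_{d+1}$ is $d$-step with the same minimal rank pattern, so by induction $Z(H/H_{d+1}) = H_d/H_{d+1}$, forcing $Z(H) \subseteq H_d$. A hypothetical $z \in Z(H) \setminus H_{d+1}$ then has the form $w^s v$ with $w$ generating the rank-one $H_d/H_{d+1}$, $s \neq 0$, and $v \in H_{d+1} \subseteq Z(H)$; using Proposition~\ref{prop: commutators} modulo $H_{d+2} = \{e\}$, centrality of $z$ reduces to $[w,h]^s = 1$ for all $h$, hence $w \in Z(H)$, whereupon $H_{d+1} = [H_d, H]$ collapses to $\{e\}$, contradicting $k = d+1$. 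The delicate point is that this rigidity relies on the extremal growth hypothesis: in a generic $(d+1)$-step torsion-free nilpotent group $Z(H)$ can strictly contain $H_{d+1}$, and only the minimal rank structure forced in Step 1 makes the center collapse in the way required to contradict the non-distortion of $\sigma$.
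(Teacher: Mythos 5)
Your first step and the case $G\cap\langle\sigma\rangle=\{1\}$ reproduce the paper's argument. The divergence, and the gap, is in the remaining case. Your pivotal algebraic claim --- that a finitely generated torsion-free $(d+1)$-step nilpotent group $H$ with $\rank(H_1/H_2)=2$ and $\rank(H_i/H_{i+1})=1$ for $i\ge 2$ must satisfy $Z(H)=H_{d+1}$ --- is false. Take $\mathcal{H}=\langle a,b\rangle$ the discrete Heisenberg group with $c=[a,b]$, and let $H=\langle a,b^2,c\rangle$. Then $H$ is torsion-free and $2$-step, $H_2=[H,H]=\langle c^2\rangle$ has rank $1$, and $H/H_2\cong\Z^2\oplus\Z/2\Z$ has rank $2$, yet $Z(H)=\langle c\rangle\supsetneq H_2$. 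The flaw in your base case is the assumption that every element of $H$ can be written as $a^pb^qh_2$ with $h_2\in H_2$: this requires $H/H_2$ to be torsion-free (equivalently, two-generated), which the rank hypothesis does not give, since Bass--Guivarc'h only sees torsion-free ranks. The same issue infects the inductive step, where you need $H/H_{d+1}$ to be torsion-free with the same rank pattern before the inductive hypothesis applies. The natural repair is to aim for the weaker statement that $Z(H)$ lies in the isolator of $H_{d+1}$; that would suffice, because $H_{d+1}\cong\Z$ contains the distorted element $z$ of Lemma~\ref{nilpotent-facts} and distortion passes to roots and powers, so some nonzero power of $\sigma$ would still be distorted. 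But establishing even the weaker statement means redoing the whole induction with isolators, which is genuine additional work not present in your sketch. (A second, smaller elision: Lemma~\ref{nilpotent-facts} produces one distorted element of $H_{d+1}$, not all of them; you need the rank-one torsion-free structure of $H_{d+1}$ to transfer its distortion to $\sigma^N$.)

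For comparison, the paper disposes of this case without computing any centers: if $\langle\sigma\rangle\cap H$ is nontrivial, the quotient $H/(\langle\sigma\rangle\cap H)$ is still $k$-step nilpotent because the distorted element $z\in H_k$ survives in it (no power of $\sigma$ is distorted, as $\rr(\sigma^n)=n$), so Lemma~\ref{lem:alg2} gives the quotient polynomial growth of degree at least $k(k+1)/2+1$; adding $1$ for the infinite central cyclic subgroup $\langle\sigma\rangle\cap H$ gives $H$ growth degree at least $k(k+1)/2+2$, contradicting Theorem~\ref{thm:heisenberg}. I would recommend adopting that route rather than repairing the center computation.
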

In particular,  for an aperiodic  minimal shift such that $P_{X}(n) =\lo(n^5)$, any finitely generated, torsion-free subgroup of  $\Aut(X)$ is virtually abelian.
 
 \begin{proof}
Let $G < \Aut(X)$ be a finitely generated, torsion-free subgroup of $\Aut(X)$. 
Theorem \ref{thm:heisenberg} ensures that $G$ has a polynomial growth of degree at most $(d+1)(d+2)/2 +1$. 
By  Gromov's Theorem~\cite{Gromov},  $G$ contains a nilpotent subgroup $H$ with finite index. 
We proceed by contradiction and assume that $H$ is a $k$-step nilpotent group for some $k > d$.

Assume first that   $\langle \sigma \rangle  \cap H  = \{1\}$. Then the group $\Aut(X)$ contains $\langle \sigma \rangle \oplus {H}$, and by Lemma \ref{lem:alg2}, this group has polynomial growth of degree at least  $k(k+1)/2 +2$.  But 
this is a contradiction of Theorem~\ref{thm:heisenberg}.

Otherwise, we assume that $\langle \sigma \rangle  \cap H $ is not trivial. 
Then the group $H /(\langle \sigma \rangle  \cap H)$ is nilpotent. 
Let  $z \in H_{k}$ be the element given by Lemma~\ref{nilpotent-facts}. 
Thus $z$  is distorted  and $z \not\in  \langle \sigma \rangle  \cap H$,  since any element in $ \langle \sigma \rangle $ is not distorted (see the computations in Section~\ref{sec:distortion}). 
It follows that $H /(\langle \sigma \rangle  \cap H)$ is $k'$-step nilpotent for some $k' \ge k$.
By Lemma~\ref{lem:alg2}, this group has polynomial growth of degree at least $k(k+1)/2 +1$. 
 Since  $\langle \sigma \rangle  \cap H $ is an infinite, finitely generated group,  $H$ has polynomial growth rate of degree at least $k(k+1)/2 +2$ (see~\cite[Proposition 2.5 (d)]{Mann} for instance). 
Again, this contradicts Theorem~\ref{thm:heisenberg}.
\end{proof}

In fact one can extract from the proof a more general, but more technical, statement, relating the homogeneous dimension
given by~\eqref{eq:BG} to the step of any finitely generated, torsion-free subgroup of the automorphism group 
for an infinite minimal shift.  

\section{Open questions}\label{sec: questions}

\begin{question}\label{question:1}
Does the discrete Heisenberg group embed into the automorphism 
group of a one-dimensional shift?   More generally, does the automorphism group of a one-dimensional shift have  a distorted element of infinite order?
\end{question}

Interest in the Heisenberg group in particular arises from Theorem~\ref{thm:heisenberg}.  
Consequently, Question~\ref{question:1} becomes most interesting if $X$ is assumed to be minimal and have $P_X(n)=O(n^d)$ as we then have a dichotomy in the possible behaviors.  If there exists a \shift such 
that the Heisenberg group embeds in its automorphism group, 
then Question~\ref{question:1} is resolved affirmatively.  
If no such system $(X, \sigma)$ exists, then by Theorem~\ref{thm:heisenberg} any finitely generated, torsion-free subgroup of $\Aut(X)$ is virtually abelian, 
as the Heisenberg group is a subgroup of any finitely generated, torsion-free, nonabelian nilpotent group, resolving Question~\ref{question:1} negatively.

More generally we have the same question for higher dimensions: 
\begin{question}
Does the discrete Heisenberg group, or more generally a finitely generated group with a distorted element of infinite order, embed 
into the automorphism group of a shift of dimension greater than one?
\end{question}

\begin{question} 
Does a group with exponentially distorted elements, for example ${\rm SL}(3,\Z)$ or the Baumslag-Solitar group $\BS(1,n)$, 
embed into the automorphism group of some positive entropy shift? 
\end{question} 
By Corollary~\ref{cor}, these groups do not embed into $\Aut(X)$ for any shift $X$ with entropy zero. 
We note that if $\BS(1,p)$ embeds in $\Aut(X, \sigma)$ for
some subshift of finite type $\sigma$ and some prime $p$, this
would answer both questions 3.4 and 3.5 of~\cite{BLR} which ask if some
some automorphism of infinite order has an infinite chain of
$p^{th}$ roots.  If $G \cong \BS(1,p)$ is a subgroup of $ \Aut(X)$ and has generators
$a, b$ with relation $b^{-1}a b = a^p$, then it is straightforward to show
that $c_k := b^k a b^{-k}, k \ge 0$ satisfies $c_k^p = c_{k-1}$ and $c_0 = a$, and
so $a$ has an infinite chain of   $p^{th}$ roots.

\end{document}